\title[]{The $\dbar$-equation on variable strictly pseudoconvex domains}
\author[]{Xianghong Gong}
 \address{Department of Mathematics,
 University of Wisconsin, Madison, WI 53706, U.S.A.}
 \email{gong@math.wisc.edu}
\author[]{Kang-Tae Kim}
\address{Department of Mathematics, POSTECH, Pohang 790-784, The
Republic of Korea}
\email{kimkt@postech.ac.kr}
 \keywords{Family of strongly pseudoconvex domains, smoothly bounded pseudoconvex domains,
boundary  regularity of  solutions of
$\db$-equation, Levi problem for families of domains}
 \subjclass[2010]{32F15, 32F26, 32W05}
\newcommand{\dist}{\operatorname{dist}}
\newtheorem{thm}{Theorem}[section]
\newtheorem{cor}[thm]{Corollary}
\newtheorem{prop}[thm]{Proposition}
\newtheorem{lemma}[thm]{Lemma}
\newcommand{\fl}[1]{\lfloor #1\rfloor}
\theoremstyle{definition}
\newtheorem{defn}[thm]{Definition}
\newtheorem{exmp}[thm]{Example}
\newtheorem{rem}[thm]{Remark}
\renewcommand{\th}[1]{\begin{thm}\label{#1}}
\renewcommand{\eth}{\end{thm}}
\newcommand{\co}[1]{\begin{cor}\label{#1}}
\newcommand{\eco}{\end{cor}}
\renewcommand{\le}[1]{\begin{lemma}\label{#1}}
\newcommand{\ele}{\end{lemma}}
\newcommand{\pr}[1]{\begin{prop}\label{#1}}
\newcommand{\epr}{\end{prop}}
\newcommand{\ga}{\begin{gather}}
\newcommand{\ega}{\end{gather}}
\newcommand{\gan}{\begin{gather*}}
\newcommand{\egan}{\end{gather*}}
\newcommand{\al}{\begin{align}}
\newcommand{\eal}{\end{align}}
\newcommand{\aln}{\begin{align*}}
\newcommand{\ealn}{\end{align*}}
\newcommand{\eq}[1]{\begin{equation}\label{#1}}
\newcommand{\eeq}{\end{equation}}
\newcommand{\DD}[2]{\frac{\partial #1}{\partial #2}}
\newcommand{\f}[2]{\frac{#1}{#2}}
\newcommand{\ci}{~\cite}
\newcommand{\cc}{{\bf C}}
\newcommand{\nn}{{\bf N}}
\newcommand{\zz}{{\bf Z}}
\newcommand{\rr}{{\bf R}}
\newcommand{\ov}{\overline}
\newcommand{\id}{\operatorname{I}}
\newcommand{\RE}{\operatorname{Re}}
\renewcommand{\dbar}{\overline\partial}
\newcommand{\cL}{\mathcal}
\newcommand{\all}{\alpha}
\newcommand{\gaa}{\gamma}
\newcommand{\Gaa}{\Gamma}
\newcommand{\del}{\delta}
\newcommand{\var}{\varphi}
\newcommand{\e}{\epsilon}
\newcommand{\om}{\omega}
\newcommand{\Om}{\Omega}
\newcommand{\la}{\lambda}
\newcommand{\pd}{\partial}
\newcommand{\re}[1]{(\ref{#1})}
\newcommand{\rea}[1]{$(\ref{#1})$}
\newcommand{\rl}[1]{Lemma~\ref{#1}}
\newcommand{\rp}[1]{Proposition~\ref{#1}}
\newcommand{\rt}[1]{Theorem~\ref{#1}}
\newcommand{\rd}[1]{Definition~\ref{#1}}
\newcommand{\rrem}[1]{Remark~\ref{#1}}
\newcommand{\rla}[1]{Lemma~$\ref{#1}$}
\newcommand{\rpa}[1]{Proposition~$\ref{#1}$}
\newcommand{\rta}[1]{Theorem~$\ref{#1}$}
\newcommand{\supp}{\operatorname{supp}}
\newcommand{\db}{\dbar}
\newcommand{\sps}{strictly plurisubharmonic}
\newcommand{\psh}{plurisubharmonic}
\newcommand{\spc}{strictly pseudoconvex}
\newcounter{pp}
\newcommand{\bpp}{\begin{list}{$\hspace{-1em}\alph{pp})$}{\usecounter{pp}}}
\newcommand{\epp}{\end{list}}
\newcounter{ppp}
\newcommand{\bppp}{\begin{list}{$\hspace{-1em}(\roman{ppp})$}{\usecounter{ppp}}}
\newcommand{\eppp}{\end{list}}
\def\beq{\begin{equation}}
\def\eeq{\end{equation}}
\begin{document}
\begin{abstract}
We investigate regularity properties of the $\db$-equation on  domains in a complex euclidean space that
depend on a parameter.  Both the interior regularity and the regularity in the parameter are obtained
for a continuous family of   pseudoconvex domains. The boundary regularity
and the regularity in the parameter are also obtained for smoothly bounded strongly pseudoconvex
domains.
\end{abstract}

 \maketitle


\setcounter{thm}{0}\setcounter{equation}{0}
\section{Introduction}\label{sec1}

We are concerned with regularity properties of  the solutions of the $\db$-equation on the domains $D^t$ that
depend on a parameter.
We assume  that   the {\it total space}
$\cL D:=\cup_{t\in[0,1]}D^t\times\{t\}$ is an open subset of $\cc^{n}\times[0,1]$. Such a family
 $\{D^t\colon t\in[0,1]\}$  is called a  {\it continuous  family} of   domains $D^t$ in $\cc^n$,
 or  {\it variable domains} for brevity.  Throughout the paper, the parameter $t$ has the range
 $[0,1]$ unless specified otherwise.

 Let us first introduce   H\"older spaces for variable domains.
  Let $0\leq\all<1$.
 Let $\nn=\{0,1,\dots\}$ and $\ov\nn=\nn\cup\{\infty\}$.
 A family $\{f^t\}$ of functions $f^t$ on $D^t$
is said to be in $C^{\all,0}(\cL D)$, if the function  $(x,t)\to f^t(x)$ is continuous on $\cL D$ and
it has finite $\all$-H\"older norms in $x$ variables   on   compact subsets of $\cL D$ (see \rd{cbj}).   By $\{f^t\}\in
C^{\ell+\all,j}_*(\cL D)$ for $\ell,j\in\ov\nn$,  we mean that all partial derivatives $\pd_x^L\pd_t^i f^t(x)$ are in $C^{\all,0}(\cL D)$ for  $|L|\leq \ell$ and $i\leq j$.
For $k,j\in\ov\nn$ with $k\geq j$,
  let $C^{k+\all,j}(\cL D)$ denote the intersection of all $C_*^{\ell+\all,i}(\cL D)$ with $i\leq j,\ell+i\leq k$.
Analogously, a family $\{f^t\}$ of $(0,q)$-forms $f^t$ on $D^t$
is   in $\cL C^{k+\all,j}(\cL D)$
 if $\{f^t_I\colon t\in[0,1]\}$ are
in the space,
where coefficients
 $f^t_I$ are defined in
 $
 f^t=\sum f_{i_1\dots i_q}^t(z)\, d\ov z_{i_1}\wedge\dots\wedge d\ov z_{i_q}
 $ with $i_1<i_2<\dots<i_q$.

 We say that a {\it smooth} family $\{D^t\}$ of
 bounded domains $D^t$ has {\it $C^{k+\all,j}$ boundary}, if $D^t$ admit defining functions $r^t$ on $U^t$ (with $\nabla_x r^t\neq 0$ at $x\in\pd D^t$) such that $\{r^t\}\in C^{k+\all,j}(\cL U)$,
where $\{U^t\}$ is a continuous family of domains of which the total space $\cL U$ is bounded and contains $\ov{\cL D}$. Finally, a family $\{f^t\}$ of $(0,q)$-forms  $f^t$ with coefficients $f_I$ defined on $\ov{D^t}$ is of class $C^{k+\all,j}(\ov{\cL D})$, if    all partial derivatives $\pd_x^L\pd_t^i f_I^t(x)$, defined on $\cL D$, extend  continuously to $\ov{\cL D}$ and their $\all$-H\"older norms  on $\ov{\cL D}$ in $x$ variables are finite.


We will prove the following  boundary and interior   regularity results.
\th{dbartb} Let   $1\leq q\leq n$.  Let $k,\ell,j\in\ov\nn$ satisfy $k\geq j$ and let $0<\all<1$. Let $\{D^t\}$ be a continuous family of non-empty domains in $\cc^n$ with an open total space    $\cL D$
  in $\cc^n\times[0,1]$.
Let  $\{f^t\}$ be a   family of
    $\db$-closed  $(0,q)$-forms $f^t$ on $D^t$. 
\bppp
\item 
Assume
 that   $\{D^t\}$ is a family of bounded domains of
  $C^{k+2,j}$ boundary,  $\{f^t\}\in C^{k+1,j}(\ov{\cL D})$, and each  $D^t$ is
    strongly pseudoconvex.
 There exist  solutions $u^t$ to $\db u^t=f^t$ on $D^t$
satisfying $\{u^t\}\in C^{k+3/2,j}(\ov{\cL D})$.
\item
Assume that domains $D^t$ respectively admit plurisubharmonic exhaustion functions $\var^t$  with   $\{\var^t\}\in C^{0,0}(\cL D)$ such that   $\{(z,t)\in\cL D\colon\var^t(z)\leq c\}$ is compact for each $c\in\rr$.
Assume that  $\{f^t\}\in C_*^{\ell+\all,j}(\cL D)$ $($resp.\!\! $C^{k+\all,j}(\cL D))$.
There exist 
solutions $u^t$   to $\db u^t=f^t$ on $D^t$ so that $\{u^t\}\in  C_*^{\ell+1+\all,j}(\cL D)$
 $($resp.\!\! $C^{k+1+\all,j}(\cL D))$.
\eppp
\eth
   We will call the above $\{\var^t\}$ a family of  plurisubharmonic {\it uniform}  exhaustion functions   of $\{D^t\}$.
When $n=1$, a precise boundary regularity is given by \rt{spc1}.
Note that $\{D^t\}$ in ($i$) satisfies the conditions in $(ii)$. Another example for $(ii)$ is the following $\cL D$ with  rough boundary.
\begin{exmp}
Let  $D^t$ be the ball in $\cc^n$  with radius $t^{-1}$ and
center  ${\mathbf c}(t)$. Let $D^0=\cc^n$.  When $\mathbf c$ is continuous in $t\in(0,1]$ and $t^{-1}-|{\mathbf c}(t)|\to+\infty$ as $t\to0^+$,
the total space of $\{D^t\}$ is open in $\cc^n\times[0,1]$, while
$$
\var^t(z)=|z|^2+\f{t^2}{1-t^2|z-{\mathbf c}(t)|^2}
$$
 are plurisubharmonic uniform exhaustion functions
  on  $D^t$ satisfying  $\{\var^t\}\in C^{0,0}(\cL D)$.
\end{exmp}


%
%

The study of  regularity  of solutions of the $\db$-equation for a fixed domain   has a long history.
Let us recall some related results.
 The existence of the   smooth   solutions on  a  Stein manifold
follows from    Dolbeault's  theorem and Cartan's Theorem B, as 
 observed
by Dolbeault~\cite{Do56}. It is also a main result of the
 $L^2$-theory (see H\"ormander~\cite{Ho65},~\cite{Ho90}).  The existence and
$C^\infty$  boundary regularity of the canonical
solutions for a strongly pseudoconvex compact manifold  with smooth boundary
were established by Kohn~\cite{Ko63} via investigating  the $\db$-Neumann problem;
Kohn \cite{Ko73} also obtained    $C^\infty$ boundary  regularity of
possibly  non-canonical solutions
for a smoothly bounded
 pseudoconvex domain in $\cc^n$
  (for instance, see  Chen-Shaw~\cite{CS01}*{p.~122}). The exact regularity in H\"older spaces of $\dbar$ solutions for $(0,q)$-forms was obtained by Siu~\ci{Si74}
 for $q=1$  and by Lieb-Range~\ci{LR80} for $q\geq1$.

  The domain dependence of the $\db$-equation has however attracted  less attention.
The $C^\infty$ regularity of solutions for elliptic partial differential equations on a family of
compact complex
manifolds (without boundary) was obtained  by  Kodaira and Spencer~\cite{KS60},
which plays an   important role  in their deformation theory.
For  planar domains depending on a parameter, the exact regularities of Dirichlet and Neumann problems were obtained   recently by Bertrand and Gong~\cite{BG14}.
Notice that
solving the $\dbar$-equation that depends on a parameter
has   played  an important role in the construction of the Henkin-Ram\'irez  functions.
However,  the domain   in this situation is    fixed, 
while    multi-parameters enter into  the non-homogenous $\db$-equation.  Such parameter
dependence is
easy to
understand once a linear $\db$-solution operator is constructed. Of course, the construction
of such a linear solution operator is included in   the classical homotopy formulae;
see~\cite{Ra69},  \cite{GL70},  \cite{He70},  \cite{Li70},   and~\cite{Ra86}.
An interesting case  is the  stability of the $\dbar$-equation in terms of a family
of strongly pseudoconvex
domains; see 
 Greene-Krantz~\cite{GK82}.
Their stability results for the $\db$-solutions can be characterized as the
continuous   dependence in parameter
   as   defined in   section~\ref{sect:holdsp}.  In~\cite{DT91},
Diederich-Ohsawa obtained $C^\infty$ regularity of
  canonical solutions of the $\db$-equation for certain  smooth $(n,1)$-forms. They proved
  the results via
 H\"ormander's $L^2$ technique for a   family of domains in a K\"ahler manifold.

Our approach relies essentially on  solution operators of the $\db$-equation that are represented by  integral formulae  for a  smoothly bounded strictly pseudoconvex domain.
To deal with variable domains, we will use the Grauert bumps to extend $\db$-closed forms to
 a continuous family of larger domains, keeping the forms $\db$-closed. For a fixed domain, the extension technique is well known through
the works of Kerzman~\cite{Ke71} and others. To apply the extension
for a continuous family of strongly pseudoconvex domains, we will  obtain  precise regularity results first for a smooth family of strictly convex domains
by using the Lieb-Range solution operator~\cite{LR80}.  The extension
  allows  us to freeze the domains to apply the classical integral $\dbar$-solution operators (\cite{GL70}, ~\cite{He70},~\cite{Ov71}). By using a    partition of unity in parameter $t$, we will   solve the $\db$-equation  for variable domains with the desired regularity.
 However,  in order to freeze the domains we must   restrict them in $\cc^n$.
    Therefore,  there are several questions arising from our approach. For instance,  it would
be interesting to know if  a more general result  can be established for  the $\db$-equation on
a family of Stein manifolds.
We notice   a remarkable construction  of an integral $\db$-solution operator by Michel~\cite{Mi91}
for a smoothly bounded {weakly}
 pseudoconvex domain in $\cc^n$.  It would be interesting to know
  if the assertion in  \rt{dbartb} $(ii)$ remains true when the given  domains  are only
weakly   pseudoconvex. 

The paper is organized as follows.

In section~\ref{sect:holdsp},  we define H\"older spaces for functions on
 variable domains.
In section~\ref{sect:bumps}, we  adapt the
Narasimhan lemma and Grauert bumps for variable domains.
In section~\ref{sect:breg},  we study the
 regularity of $\db$-solutions on variable domains first for strictly convex case
 and then for
strictly pseudoconvex case.
 The   Lieb-Range solution operator and Kerzman's extension method~\cite{Ke71} for $\db$-closed forms are used in section~\ref{sect:breg} where    \rt{dbartb} ($i$) is proved in \rt{spc}.

In section~\ref{sect:hfp}, we obtain Henkin-Ram\'irez functions for  strictly  pseudoconvex  open sets
that depend on a parameter, which in turn gives us a
   homotopy
formula  for variable strictly pseudoconvex domains. The Henkin-Ram\'irez functions
are used in
section~\ref{sect:ireg} to obtain a parametrized version of the Oka-Weil approximation.   \rt{dbartb} $(ii)$
 is proved in \rt{intr}. As an application,
we  solve a parametrized   version of the Levi problem for variable domains in $\cc^n$.
Finally, we use the $\db$-solutions with parameter to solve a parametrized version of Cousin
problems.

\setcounter{thm}{0}\setcounter{equation}{0}
\section{H\"older spaces for functions on variable domains}
\label{sect:holdsp}

 We first describe some notation used for the rest of the exposition.
We use real variables $x=(x_1,\ldots, x_{d})$ for $\rr^d$. In our application, $d=2n$.
Let $\pd_{x}^{k}$ denote the set of partial derivatives in $x$ of
order $k$.
Let $\hat\pd_x^k$ denote the set of partial derivatives in $x$ of order $\leq k$.
   Recall that $\nn=\{0,1,2,\dots\}, \ov\nn=\nn\cup\{\infty\}$. Set $\rr_+=[0,\infty)$ and $\ov\rr_+=\rr_+\cup\{\infty\}$.

The main purpose of this section is to define H\"older spaces for functions on variable domains.
 When proving boundary regularities of $\db$-solutions, we need to parameterize the variable domains up to boundary.
Therefore, we will define these spaces first via parametrization. We will then define the spaces without using parametrization. We will discuss the relation between two definitions.

When $a$ is a real number, we denote by   $\fl{a}$ 
 the largest integer that does not
exceed $a$.
 Let $D\subset\rr^{d}$ be a bounded domain  with $C^1$ boundary.
Let $  C^{a}(\ov D)$
 be the standard H\"older space  with  norm
$|\cdot|_{D;a}$ on $\ov D$.  Let $j\geq0$ be an integer.
Let $\{u^ t\colon t_0\leq t\leq t_1\}$, with $t_0,t_1$  finite, be a family of functions $u^t$ on $\ov D$. We say that
  it belongs to $C_*^{a, j}(\ov D\times[t_0,t_1])$, abbreviated by $\{u^ t\}
  \in C_*^{a, j}(\ov D\times[t_0,t_1])$,
    if
$ t\mapsto\pd_t^iu^t$, with $ i\leq j$,
are continuous maps from
$[t_0,t_1]$   into $ C^{\fl{a}}(\ov D)$,
and if they are bounded maps sending $[t_0,t_1]$ into $ C^{a-\fl{a}}(\ov D)$.
For a real number $a$ and an integer $j$ with $a\geq j\geq 0$, define
$$
C^{a,j}(\ov D\times[t_0,t_1])
=\bigcap_{i\leq j}C^{a-i,i}_*(\ov D\times[t_0,t_1]).
$$
 For brevity, we write
$C_*^{b,j}(\ov D)=C_*^{b,j}(\ov D\times[0,1])$ and $C^{a,j}(\ov D)=C^{a,j}(\ov D\times[0,1])$.

We now define H\"older spaces on variable domains given by a parametrization. 
Let $D^ t$ be   domains
 in $\rr^d$ and let $\Gamma^ t$  be $ C^1$ embeddings from
 $\ov D$  onto $\ov {D^ t}$.
Let $\{u^t\}$ be a family of functions $u^t$   respectively defined on $D^t$. Write
 $\{u^ t\}
 \in{ C}_*^{a,j}(\ov D_\Gamma)$
when  $\{v^ t\circ\Gamma^ t\}$ is in $ { C}_*^{a,j}(\ov{ D})$.
Define
\gan
 C^{a,j}(\ov D_\Gamma)= \bigcap_{i=0}^j{ C}_*^{a-i, i}(\ov D_\Gamma),
\end{gather*}
for   an integer $j$ in $[0,a]$.   We define $C_*^{\infty, j}(\ov D_\Gamma)=\bigcap_{k=1}^\infty C_*^{k, j}(\ov D_\Gamma)$.
Similarly, define
$C^{\infty, j}(\ov D_\Gamma)$, $C_*^{\infty, \infty}(\ov D_\Gamma)$, and
$C^{\infty, \infty}(\ov D_\Gamma)$.

While writing $\{u^ t\}$ as $u$ and $\{v^t\circ\Gamma^ t\}$ as $v\circ\Gaa$,
we define  the  norms:
\begin{gather*}
|u|_{D;a,i }=\sup_{0\leq \ell\leq i,   t\in[0,1]}
\{|\pd_ t^\ell u^ t |_{D;a}\}\quad \text{if $u\in C_*^{a,i}(\ov D)$};\\
  \|u\|_{D;a,j}=\max_{0\leq i\leq j} \{|u|_{D;a-i, i}\}, \quad \text{if $u\in C^{a,i}(\ov D)$},\\
 |v|_{D_\Gaa;a,j }=|v\circ\Gaa|_{D;a,j},\quad \text{if $v\in C_*^{a,j}(\ov D_\Gaa)$};\\
  \quad  \|v\|_{D_\Gaa;a,j }  =\|v\circ\Gaa\|_{D;a,j}, \quad \text{if $v\in C^{a,j}(\ov D_\Gaa)$}.
\end{gather*}
%
 Note that when defining $C^{a,j}(\ov D_\Gaa)$, we assume that $D$ is bounded with $C^1$ boundary and $a\geq j$.  Let us first ensure   the independence of
  the H\"older spaces on the parametrization under mild conditions.
\begin{lemma}\label{chainle} 
Let $D, D^t$ be  bounded domains in $\rr^d$ with $C^1$ boundary, and let
 $\{\Gaa^t\}\in C^{a,j}(\ov { D})\cap
 C^{1,0}(\ov { D})$  be a family of embeddings $\Gaa^t$  from $ \ov{D}$ onto $\ov{D^t}$.
 Then we have the following.
\bppp
\item $\cL D=\cup_t D^t\times\{t\}$ is open in $\rr^d\times[0,1]$,
 $\ov{\cL D}$ is compact, and $\ov{\cL D}=\cup_t \ov{D^t}\times\{t\}$.
\item A family $u=\{u^t\}$ of functions $u^t$
 on $\ov{D^t}$ is in $C^{a,j}(\ov {D}_{\Gaa})$ if and only if
\gan
\pd_t^\ell\pd_x^Ku^t(x)\in C^0(\ov{\cL D}), \quad \forall \ell\leq j,   |K|\leq a-\ell;\\
\|u\|_{\cL D;a,j}:= \max_{0\leq i\leq j} \{|u|_{\cL D;a-i, i}\}<\infty
\end{gather*}
with $
|u|_{\cL D;b,i }:=\sup_{0\leq \ell\leq i,   t\in[0,1]}
\left\{|\pd_ t^\ell u^ t |_{{D^t};b}\right\}.
$ 
For some  constants $C_1,C_2$ depending on $ \|\{\Gaa^t\}\|_{D;a,j}$ and
$\inf_{\cL D} |\pd_x\Gaa^t|$
$$
C_1^{-1}\|u\|_{D_\Gaa;a,j}\leq
\|u\|_{\cL D;a,j}\leq C_2\|u\|_{D_\Gaa;a,j}.
$$
\eppp
\end{lemma}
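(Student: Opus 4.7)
The argument splits along the two assertions, with the map $F\colon\ov D\times[0,1]\to\rr^d\times[0,1]$, $F(x,t)=(\Gaa^t(x),t)$, playing a central role throughout. Because $\{\Gaa^t\}\in C^{1,0}(\ov D)$ and each $\Gaa^t$ is a bijection onto $\ov{D^t}$, $F$ is a jointly continuous bijection onto $\bigcup_t\ov{D^t}\times\{t\}$; since its source is compact and target Hausdorff, $F$ is a homeomorphism onto its image, and that image is in particular compact in $\rr^d\times[0,1]$.

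For part (i), to show $\cL D$ is open at $(y_0,t_0)=(\Gaa^{t_0}(x_0),t_0)$ with $x_0\in D$, pick $\rho>0$ with $\ov{B(x_0,\rho)}\subset D$. Joint continuity of $(x,t)\mapsto\pd_x\Gaa^t(x)$ and invertibility of $\pd_x\Gaa^{t_0}(x_0)$ yield a uniform inverse function theorem: for $|t-t_0|$ small, $\Gaa^t(B(x_0,\rho))$ contains a ball of radius bounded below about $\Gaa^t(x_0)$, and $\Gaa^t(x_0)$ stays close to $y_0$. Hence a product neighborhood of $(y_0,t_0)$ sits in $\cL D$. The identity $\ov{\cL D}=\bigcup_t\ov{D^t}\times\{t\}$ is then immediate: the right side is closed and contains $\cL D$, while any $(\Gaa^t(x),t)$ with $x\in\pd D$ is the limit of $(\Gaa^t(x_n),t)\in\cL D$ for $x_n\in D$ with $x_n\to x$.

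For part (ii), set $v^t=u^t\circ\Gaa^t$ on $\ov D$. Fa\`a di Bruno in the joint variables $(x,t)$ produces an identity of the schematic form
\[
\pd_t^i\pd_x^L v^t(x)=\sum c\,(\pd_t^{i_0}\pd^K u^t)\bigl(\Gaa^t(x)\bigr)\prod_p\pd_t^{i_p}\pd_x^{L_p}\Gaa^t(x),
\]
where $\pd^K$ denotes a $K$-th order spatial derivative of $u^t$ in its argument and the finite sum runs over multi-indices with $i_0+\sum_p i_p=i$ and $\sum_p|L_p|=|L|$. Since $\{\Gaa^t\}\in C^{a,j}(\ov D)$ with $a\geq j$, each Jacobian factor lies in an appropriate $C_*^{\cdot,i_p}$ space, and standard H\"older product and composition estimates convert the norm $\|u\|_{\cL D;a,j}$ into a bound on each $\|v^t\|_{D;a-i,i}$, giving $\|u\|_{D_\Gaa;a,j}\leq C_1\|u\|_{\cL D;a,j}$. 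For the reverse, $\inf|\pd_x\Gaa^t|>0$ on the compact $\ov D\times[0,1]$; the implicit function theorem applied to $\Phi(x,y,t)=\Gaa^t(x)-y$ yields $\{(\Gaa^t)^{-1}\}\in C^{a,j}(\ov{\cL D})$ with norm controlled only by $\|\{\Gaa^t\}\|_{D;a,j}$ and $\inf|\pd_x\Gaa^t|$, and Fa\`a di Bruno applied to $u^t=v^t\circ(\Gaa^t)^{-1}$ supplies $\|u\|_{\cL D;a,j}\leq C_2\|u\|_{D_\Gaa;a,j}$.

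The main bookkeeping obstacle lies in the fractional case. For non-integer order $a$, the $(a-\fl a)$-H\"older seminorm of $g\circ\Gaa^t$ in $x$ is comparable to the corresponding seminorm of $g$ in the image variable only via the uniform bi-Lipschitz bound on $\Gaa^t$ and $(\Gaa^t)^{-1}$. One must additionally check that each summand in the Fa\`a di Bruno expansion lands in a space $C_*^{a-i,i}$ with $i\leq j$, so that the intersection $C^{a,j}=\bigcap_{i\leq j}C_*^{a-i,i}$ is respected by both directions of the transfer. These checks are routine once the uniform inverse function theorem has been put in place.
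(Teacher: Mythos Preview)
Your proposal is correct and follows essentially the same route as the paper: the homeomorphism $F(x,t)=(\Gamma^t(x),t)$ handles part~(i), and the chain rule together with the bi-Lipschitz bound on $\Gamma^t$ handles part~(ii). The paper's version differs only cosmetically: for~(i) it invokes the homeomorphism property of $F$ directly (implicitly using invariance of domain) rather than your uniform inverse-function-theorem argument, and for~(ii) it writes out the first-order chain-rule identities $(\partial_y u^t)\circ\Gamma^t\,\partial_x\Gamma^t=\partial_x(u^t\circ\Gamma^t)$ and $(\partial_t u^t)\circ\Gamma^t=\partial_t(u^t\circ\Gamma^t)-(\partial_y u^t)\circ\Gamma^t\,\partial_t\Gamma^t$ explicitly for the base cases $C^{1,0}$ and $C^{1,1}$, then declares the remaining $(a,j)$ immediate, whereas you package the induction into Fa\`a di Bruno and the implicit function theorem.
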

\begin{proof} We get ($i$) easily, since
$
(x,t)\mapsto (\Gaa^t(x),t)
$
defines a   homeomorphism $\Gaa$ sending $ \ov D\times[0,1]$ onto   $\cup_t\ov{D^t}\times\{t\}$.
Thus $\cL D=\Gaa(D\times[0,1])$ is open in $\rr^d\times[0,1]$ and $\pd\cL D:=\Gaa(\pd D\times[0,1])$
is compact.

Throughout  the paper,
the boundary value of a partial derivative  $\pd_t^\ell\pd_x^Ku^t$  at a point in $\pd{\cL D}$
is regarded as an extension in the pointwise limit, if it exists,
of the derivatives in the open set $\cL D$.

We now verify $(ii)$. Since $\Gaa^t$ are embeddings and $\{\Gaa^t\}\in
 C^{1,0}(\ov { D})$, the Jacobian matrix $\pd_x \Gaa^t$ is non-singular and continuous on $\ov D\times[0,1]$. Since $\pd D\in C^1$,
   the fundamental theorem of calculus yields
 $$
 |(\pd_x\Gaa^t)(x'-x)|/2\leq |\Gaa^t(x')-\Gaa^t(x)|\leq 2 |(\pd_x\Gaa^t)(x'-x)|,
 $$
 when $x'$ is sufficiently close to $x$.
This shows that
$$
 |x'-x|/C\leq |\Gaa^t(x')- \Gaa^t(x)|\leq C |x'-x|.
 $$
Thus, we obtain the lemma for $a<1$. Let $y=\Gaa^t(x)$.  By the chain rule, we have
$$
\pd_y((\Gaa^t)^{-1})=(\pd_x\Gaa^t)^{-1}\circ(\Gaa^t)^{-1}(y), \
\pd_t(\Gaa^t)^{-1}(y)=-((\pd_x\Gaa^t)^{-1}\pd_t\Gaa^t)\circ(\Gaa^t)^{-1}(y).
$$
Assume that $\{u^t\}\in C^{1,0}$. Since $\{\Gaa^t\}\in C^{1,0}$,   the chain rule says that
$$
(\pd_yu^t)\circ\Gaa^t\pd_x\Gaa_1^t=\pd_x(u^t\circ\Gaa^t(x)).
$$
This shows that $\{u^t\}$, $\{\pd_yu^t\}$ are in $ C^{0,0}$ if and only if $\{u^t\}\in C^{1,0}$. Assume that $\{u^t\}\in C^{1,1}$. Using the existence of partial derivatives $\pd_yu^t$ and computing limits directly, we verify that
$$
(\pd_tu^t)\circ\Gaa ^t(x)=\pd_t(u^t\circ\Gaa^t(x))-(\pd_yu^t)\circ\Gaa^t(x)\pd_t\Gaa ^t(x)
$$
for $x\in D$ and hence for all $x\in\ov{ D}$ by continuity.  This shows that $\{u^t\}$, $\{\pd_yu^t\}$, and $\{\pd_tu^t\}$ are in $C^{0,0}$ if and only if $\{u^t\}$ is in $C^{1,1}$.
The lemma follows immediately for other values of $a,j$.
\end{proof}

We now define $C^{a,j}_*(D_\Gaa)$ for a family of $C^1$ embeddings  $\Gaa^t$ from
an arbitrary open set $D\subset\rr^d$ onto $D^t\subset\cc^n$.
By $\{f^t\}\in C_*^{a,j}(D_\Gaa)$, we  mean that $f^t$ are functions
on $D^t$ such that
$\{f^t\}\in   C_*^{a,j}(K_\Gaa)$ for any subset  $K$ of  $D$
 with smooth
boundary.  Again, when all $\Gaa^t$ are the identity map, we define $C_*^{a,j}(D)$ to
be  $C_*^{a,j}(D_\Gaa)$.
Define $C^{a,j}(D_\Gaa)$ and $C^{a,j}(D)$ similarly.


 We will denote by $\cL U(E)$ a neighborhood of $E$ when $E$ is a subset of $\rr^d$.
For example, we say that
 $D$ is defined by $r<0$ if $r$ is a real function
on some $\cL U(\ov D)$
on which $D$ is defined by $r<0$.


We will use the following Seeley extension operator~\ci{Se64}.
\le{seeleyext} Let $H=\rr^d\times[0,\infty)$.
There is a continuous linear extension operator
$E\colon C_0^0(H)
\to C_0^0 (\rr^{d+1})$
 such that $Ef=f$ on $H$ and
 $E: C_0^a(H)
\to C_0^a (\rr^{n+1})$ is continuous  for each $a\geq0$.
\ele
Here $C^{\bullet}_0$ stands for functions with compact support.
Seeley~\cite{Se64} showed that there
 are  numerical sequences $\{a_k\}, \{b_k\}$   such that ($i$) $b_k < 0$ and $b_k\to-\infty$, $(-1)^ka_k>0$,
$(ii)$ $\sum |a_k|\cdot |b_k|^n<\infty$  for $n = 0, 1, 2, \ldots$, $(iii)$ $\sum a_k(b_k)^n=1$  for
$n = 0, 1, 2, \ldots$.   Then Seeley defined the extension
$$
(Ef)(x, s)=\sum_{k=0}^\infty a_k\phi(b_ks)f(x, b_ks).
$$
Here $\phi$ is a $C^\infty$ function satisfying $\phi(s)=1$ for $s<1$ and $\phi(s)=0$
for $s>2$. For a differential form $f$, we define $Ef$ by extending the coefficients of $f$ via $E$.

Let us use the extension operator to discuss the space $C^{a,j}(\ov D_\Gaa)$
and a  version of   extension  for variable domains. We will also discuss an approximation.
 Let $C_0^{a,j}(D_\Gaa)$ denote the set of   $\{f^t\}\in C^{a,j}(D_\Gaa)$ such that $(x,t)\to f^t (x)$ has compact support in the total space of
 $\{\Gaa^t(D)\}$.
\le{seeleyp}  Let $D$ be a bounded domain
in $\rr^d$ with $C^a$ boundary and $a\geq1$.  Let $\{\Gaa^t\}\in C^{a,j}(\ov D)$ be a family of embeddings $\Gaa^t$ from $\ov D$ onto $\ov{D^t}$.
\bppp
\item
 There exist an open neighborhood $U$ of $\ov D$  with $\pd  U\in C^a$
 and a family of embeddings $\hat\Gaa^s$ from $\ov U$ onto $ \ov{U^s}$ for $s_0\leq s\leq s_1$
 with  $s_0<0$, $1<s_1$ such that $\{\hat\Gaa^s\colon s_0\leq s\leq s_1\}$ is of class $C^{a,j}(\ov U)$. Moreover, $\hat\Gaa^t=\Gaa^t$ on $\ov D$ for $t\in[0,1]$.
There exists an $\rr$-linear  extension operator
$$
E\colon C^{a,j}(\ov D_\Gaa)\to C^{a,j}_0(U_{\hat \Gaa})$$
satisfying
$
\|Ef\|_{U_{\hat\Gaa};a',j'}\leq C_{a}\|f\|_{D_\Gaa; a',j'}$  for all finite  $a'\leq a$ and finite $j'\leq j$. Here
$$
(Ef)^t|_{\ov {D^t}}=f^t, \quad f=\{f^t\},\quad\forall t\in[0,1].
$$

\item
Let $f\in C^{b,k}(\ov D_\Gaa)$. There exists
a sequence in $ C^{a,j}(\ov{D}_\Gaa)$ that is bounded in $C^{b,k}(\ov D_\Gaa)$
and converges to $f$ in $C^{\fl{b},k}(\ov D_\Gaa)$.
\eppp
\ele
\begin{rem} In ($i$) of the lemma,
the    $C^{a,j}(\ov U_{\hat\Gaa})$ is defined as
  $C^{a,j}(\ov U_{\Gaa})$,  where
   $\{\Gaa^t\colon 0\leq t\leq 1\}$ is replaced by
    $\{\hat\Gaa^s\colon s_0\leq s\leq s_1\}$.
\end{rem}
\begin{proof} ($i$) Since $\pd D$ is in $C^a$ with $a\geq1$,
we  can  locally use a $C^a$ coordinate change $\var$ to transform
$\pd D$ into the boundary of the half-space $x_{d}\geq0$. We then extend  the mapping
 $f^t:=\Gaa^t\circ\var^{-1}$ to $g^t$ 
  by
\eq{etfx}
g^t (x )
=\sum_{k=0}^\infty a_k\phi(b_kx_{d})f^t(x', b_kx_{d}), \quad\forall x_{d}<0.
\eeq
Set $\Gaa_*^t=g^t\circ\var.$
Thus, using a partition of unity
and local changes of coordinates, we can extend
$\Gaa^t\colon \ov D\to\ov{D^t}$ to  embeddings  $\Gaa_*^t$ from
$\ov{U}$ onto $\ov{ U^t}$ for a smoothly bounded domain $U$  containing
$\ov{D}$, while  $\{\Gaa_*^t\}\in C^{a,j}(\ov{U})$.
Next, we extend $\{\Gaa^t_*\colon 0\leq t\leq 1\}$ to a family
$\{\hat\Gaa^s\colon s_0\leq s\leq s_1\}$.
Let us use
  Seeley's extension for the half-space $s\geq 0$.  By a partition of unity in the $t$
  variable, we may assume
 that $\Gaa_*^t=0$ for $1/2<t<\infty$. Define
\eq{hgsx}
\hat\Gaa^s(x)=\sum_{\ell=0}^\infty a_\ell\phi(b_\ell s)\Gaa_*^{b_\ell s}(x), \quad\forall  s<0.
\eeq
Applying Seeley's extension to the half-space $s\leq 1$,
 we can  extend $\{\Gaa^t\}$ to a family of
 embeddings $\hat\Gaa^s$ from $ \ov{U}$ onto $ \ov{U^s}$ for $s\in [s_0,s_1]$,
 when $-s_0$,  $s_1-1$ are positive and sufficiently small.
We leave it to the reader to check that
$\{\hat\Gaa^s\}$
is in $C^{a,j}(\ov{U})$.  The extension $E\colon C^{a,j}(\ov D_{\Gaa})\to C^{a,j}(\ov{U}_{\hat\Gaa})$
can be defined  by formulas similar to \re{etfx}-\re{hgsx} and by shrinking $\ov U$,
 $[s_0,s_1]$ slightly.

$(ii)$
As above, a family $\{f^t\}\in C^{b,k}(\ov D)$ of functions $f^t$ on $\ov{ D^t}$
 extends to a family $\{\tilde f^t\}\in C^{b,k}(\ov {U}_{\hat\Gaa})$.
Let   $\{f^t\}\in C^{b,k}(\ov{D})$. Applying the standard smoothing operator on  $U\times (s_0,s_1)$
to $\tilde f^s(\hat\Gaa^s(x))$, we can verify the approximation on the compact
subset  $\ov D\times[0,1]$ of $ U\times(s_0,s_1)$.
\end{proof}

 \rl{chainle} says that with $a\geq1$,   space $C^{a,j}(\ov{D_\Gaa})$ does not depend    on   the parameterizations $\Gaa^t\colon \ov D\to\ov{D^t}$, provided they exist.
Next, we study the existence of parameterizations $\Gaa^t$.
To this end, we first introduce function spaces without using parametrization.
 Recall from section~\ref{sec1} that  $\{D^t\colon t\in[0,1]\}$  is a   continuous  family of  domains $D^t$ in $\rr^d$, if the total space
$\cL D=\cup_{t}D^t\times\{t\}$ is open in $\rr^{d}\times[0,1]$.
\begin{defn} \label{cbj} $(i)$ Let $\{D^t\colon t_0\leq t\leq t_1\}$ be a continuous family of domains in $\rr^d$ with total space $\cL D$.  We
 say that a family $\{u^t\}$ of functions $u^t$ on  $\ov{D^t}$  is in
 $C_*^{b,j}(\ov{\cL D})$ for finite $b$ and $j$, if
\gan
\pd_t^\ell\pd_x^Lu^t(x)\in C^0(\ov{\cL D}), \quad \forall \ell\leq j,   |L|\leq b;\\
|u|_{\cL D;b,i }:=\sup_{0\leq \ell\leq i,   t\in[t_0,t_1]}
\left\{|\pd_ t^\ell u^ t |_{{D^t};b}\right\}<\infty.
\end{gather*}
Define $C^{a,j}(\ov{\cL D})=\bigcap_{i\leq j}C_*^{a-i,i}(\ov{\cL D})$ and the norm
$$
\|u\|_{\cL D;a,j}:= \max_{0\leq i\leq j} \{|u|_{\cL D;a-i, i}\}.
$$
 Define $C_*^{b,j}(\ov{\cL D}), C^{a,j}(\ov{\cL D})$ similarly if one of $a,b,j$
is infinity.  Let $C_*^{b,j}(\ov{\cL D})$, $C^{a,j}(\ov{\cL D})$  be  the sets of functions $v$ on $\ov{\cL D}$  with  $\{v(\cdot,t)\}$  in $ C_*^{b,j}(\ov{\cL D})$,  $ C^{a,j}(\ov{\cL D})$, respectively.

$(ii)$ By $\{u^t\}\in C^{a,j}(\cL D)$ we mean that  $\{u^t\}\in C^{a,j}(\ov\om)$
for any relatively  compact open subset $\om$
of $\cL D$ and for $\om^t=\{x\colon(x,t)\in\om\}$.
Define $C_*^{b,j}(\cL D)$,  $ C^{a,j}(\cL D)$,  $C_*^{b,j}({\cL D})$, $C^{a,j}({\cL D})$ analogously.
The topology on $C^{a,j}(\cL D)$ is defined by semi-norms $\|\cdot\|_{\om; a',j'}$, where $a'\leq a, j'\leq j$,
$a',j'$ are finite
 the sets $\om$ are relatively compact in $\cL D$.  Define the topologies of other spaces analogously.
 \end{defn}

For clarity, sometimes we denote  $C^{a,j}(\cL D), C^{a,j}(\ov{\cL D})$ by $C^{a,j}(\{D^t\})$, $C^{a,j}(\{\ov{D^t}\})$, respectively.


%

\pr{nonparem}Let   $a\in\ov\rr_+, j\in\ov\nn$ and $a\geq j$. Let $\{D^t\}$ be a continuous family of non-empty domains in $\rr^d$ with a bounded total space  $\cL D$  and let $b \cL D$ be the boundary of $\cL D$ in $\rr^d\times[0,1]$.  Then the following are equivalent$:$
\bppp
\item For each $t_0\in[0,1]$, there exist a neighborhood $I$ of $t_0$ in $[0,1]$ and
  a family $\{\Gaa^t\}\in C^{a,j}(\ov D\times\ov I)\cap C^{1,0}(\ov D\times\ov I)$ of embeddings
$\Gaa^t$ from $\ov D$ onto $ \ov{D^t}$ with $\pd D\in C^a\cap C^1$.
\item
For every $(x_0,t_0)\in b \cL D$  there exist
an open neighborhood  $\om$ of $(x_0,t_0)$ in $\rr^d\times[0,1]$ and a real function $r\in C^{a,j}(\ov\om)\cap C^{1,0}(\ov\om)$
such that $r(x_0,t_0)=0$, and
\eq{dtr}
 \cL D\cap\om=\{(x,t)\in\om\colon r(x,t)<0\}; \quad \nabla_xr(x,t)\neq0, \quad\forall (x,t)\in b\cL D\cap\om.
\eeq
\item $\ov{\cL D}$ is contained in  a  domain $\om$  in $\rr^d\times[0,1]$   and there is a real function $r$ on $\om$
of class $C^{a,j}\cap C^{1,0}$ such that \rea{dtr} holds.
\eppp
We say that  $ \{ {D^t}\}$ is a smooth family  of   bounded domains  of  $C^{a,j}\cap C^{1,0}$  boundary if its total space is bounded and it satisfies  one of the above equivalent conditions.
\epr
\begin{proof} We first show that ($i$) implies $(iii)$.
Assume that such a family $\{\Gaa_\all^t\}$ exists for $t\in \ov{I_\all}$, where $I_\all$ is a connected open set $I_\all$ in $[0,1]$ and $\{I_\all\}$ is a finite open covering of $[0,1]$. Since $[0,1]$ is connected, we may assume that $D$ is independent of $\all$.
 By \rl{seeleyp} ($i$),  we may assume that
$\Gaa_\all^t$ extend to embeddings $\Gaa_\all^t$ from $\ov{U}$ onto $ \ov{U^t}$ with $\ov {D}\subset U$ and $\{\Gaa_\all^t\}\in C^{a,j}(\ov {U}\times \ov{I_\all})$.
 Let $r_0$ be a real function of class $C^a$ on $U$ such that $D$ is defined by $r_0<0$ and $\nabla r_0\neq0$ on $\pd D$. Then $r(x,t)=\sum\chi_\all(t)r_0\circ(\Gaa_\all^t)^{-1}(x)$
has the desired property, while $\om$ is an open subset of $\bigcup_\all\{(\Gaa_\all^t(x),t)\colon x\in U\}$.

Clearly, $(iii)$ implies $(ii)$. Let us now show that $(ii)$ implies  $(iii)$.
 To this end, let us  show that there are a neighborhood $\om$ of $\ov{\cL D}$ and a real function $r\in\cL C^{a,j}(\ov\om)$ such that
\re{dtr} holds. In other words, $r$ is a global definition function of $\cL D$.
By \re{dtr} and the local function $r$ satisfying $r(x_0,t_0)=0$, it is clear that $\cL D$ does not contain boundary points of $\cL D$. So $\cL D$ is open.
Since $\pd\cL D$ is compact, we cover it by finitely many open sets $\om_\all$ such that on  $\om_\all$
there are real functions $r_\all\in C^{a,j}(\ov{\om_\all})$ such that
$$
\cL D\cap\om_\all\colon r_\all<0;  \quad \nabla_x r_\all(x,t)\neq0, \quad\forall x\in\pd D^t\cap \om_\all.
$$
Let $r_0=-1$ on $\om_0=\cL D$. Choose a partition $\{\chi_0,\chi_\all\}$  of unity by non-negative
functions
  subordinate to
the open covering $\{\om_0,\om_\all\}$. Note that $\nabla_xr_\all(x,t)$ is a non-zero
outward normal vector
at $x\in\pd D^t$.  Shrinking $\om=\om_0\cup(\cup_\all\om_\all)$ slightly, we can verify that $r=\sum\chi_\all r_\all$ has the required property.

Next, let us show that $(iii)$ implies ($i$).
Assume  that the function $r$ is given as in \re{dtr} with $\om$ being an open neighborhood of $\ov{\cL D}$.
Define $\om^{t}=\{x\colon (x,t)\in \om\}$.
 Fix $t_0$. We need to  find a family of embeddings $\Gaa^t$ from $\ov D$ onto $\ov{D^t}$ for $t$ near $t_0$.
 Let $n(x)$ be the  gradient vector field of $r(x,t_0)$. We  approximate $n(x)$ by a vector field $v(x)$ such that
$v$ is of class $C^{a}$ on $\om^{t_0}$ and such that for $x$ in a small neighborhood $V$ of $\pd D^{t_0}$, the line segment $\{x+sv(x)\colon -\e\leq s\leq \e\}$ intersects  $\pd D^{t}$ transversally at
a unique
point with $s=S(x,t)$ for $|t-t_0|<\del$. Note that $s$ is the unique solution to
$$
r(x+sv(x),t)=0, \quad |s|<\e.
$$
Let $D^t_c\subset\om^t$ be defined by $r(\cdot,t)<c$.
Fix a small positive constant $-c_1$.  For $x$ near $\pd D_{c_1}^{t_0}$, let $b\in[-\e,\e]$ be the unique
  number such that $x+bv(x)$  is in $\pd D^{t_0}$.  For $t$ close to $t_0$,
there exists a unique  $\tilde b\in[-\e,\e]$   such that $x+\tilde bv(x)$  is in $\pd D^{t}$.
 Note that $b$ depends on $x$, while $\tilde b$ depends on  $x,t$; and both are positive.
 We will find  $\nu=\nu(x,t,\la)$ that is strictly increasing in $\la$
such that $\nu(x,t,\la)\equiv\la$ for $\la$ near $0$, while  at the end point $\la=b(x)$ we have
$$
 \nu(x,t,b(x))=\tilde b(x,t).
$$
For the existence of $\nu$ and its smoothness, we take a smooth decreasing function $\chi$
such that $\chi(\la)$ equals $1$ near $\la=0$ and  $0$ near $\chi=b$. Furthermore, $\int_0^b\chi\, d\la<b/2$. Note that the latter is less than $\tilde b$ when $t$ is close to $t_0$. Define
\eq{ch1}
\chi_1=\frac{\tilde b(x,t)-\int_0^b\chi\, d\la}{\int_0^b(1-\chi)\, d\la}(1-\chi), \quad \nu=\int_0^\la(\chi+\chi_1)\, d\la.
\eeq
We then define $\Gaa^t=\id$ on $D_{c_1}^{t_0}$ and
$$
 \Gaa^t(x+\la v(x))=x+\nu(x,t,\la)v(x), \quad\text{for $ 0\leq\la\leq b(x),   x\in\pd D^{t_0}_{c_1}$}.
$$
Then  $\Gaa^t$ embeds $  \ov{D^{t_0}}$ onto $\ov {D^t}$ for  $t$ close to $t_0$. To verify the smoothness of $\{\Gaa^t\}$, let   $x,y$ be in $\rr^d$. We start with equations that determine $b=b(x)$:
$$
x+bv(x)=y, \quad r(y,t_0)=0, \quad r(x,t_0)=c_1.
$$
The first two equations determine $b$   via $x$. Indeed, the Jacobian determinant of $y-x-bv(x), r(y,t_0)$
in $y,b$ equals $-v(x)\cdot\nabla_yr(y,t_0)$. The latter is not zero, since $x$ is close to $y$, $ v(x)$ is close to $\nabla_y r(y,t_0)$, and $\nabla_yr(y,t_0)\neq0$ near $\pd D^{t_0}$. This shows that $b$ is a function of class $C^a$ in $x$ near $\pd D^{t_0}$. The $\tilde b=\tilde b(y,t)$ is determined by
$$
x+\tilde bv(x)=y, \quad r(y,t)=0, \quad r(x,t_0)=c_1.
$$
We see that $\tilde b$
 is of class $C^{a,j}$ in $y$    near $\pd D^{t_0}$ and in $t$   near $t_0$.
Finally, we consider equations $\Gaa^t(y)=u(y,t)$, which can be written as
$$
x+\la v(x)=y, \quad r(x,t_0)=c_1, \quad u=x+\nu(x,t,\la)v(x).
$$
We want to use the first two equations to determine $x,\la$ via $y$
 and  the last equation to determine
$u(y,t)$.    Recall that $0\leq\la\leq b(x)\leq\e$ and $\e>0$ is small. At $\la=0$, the Jacobian determinant of $x+\la v(x),r(x,t_0)$ in $x,\la$ is $-|\nabla_xr(x,t_0)|^2$, which is non zero. By the implicit function theorem, we can verify
that $(x,\la)$ is of class $C^a$ in $y$.  By the smoothness properties  of $b,\tilde b$ verified above
and by \re{ch1}, we know that $\nu(x,t,\la)$ is of class $C^{a,j}$ in $y,t$. This shows that $\{\Gaa^t\}$
is of class $C^{a,j}$ as $t$ varies near $t_0$.
\end{proof}

\begin{rem}
By \rl{chainle} and \rp{nonparem},   the $C^{a,j}(\ov D_\Gaa)$  space is independent of
  $\{\Gaa^t\}\in C^{a,j}(\ov D)$  when $a\geq1$.  Furthermore, the parameter $t$ can be in several variables and  a parametrization $\Gaa^t$ can be obtained
for $t$ near a given point $t_0$.
\end{rem}

The smooth approximation for $C^{a,j}(\ov{\cL D})$ is given by \rl{seeleyp}.
We conclude this section with the following approximation result.
\pr{intapp}Let $\{D^t\}$ be a continuous family of domains. Then $C^{\infty,\infty}(\cL D)$ is dense
in $C^{a,j}(\cL D)$ and in $C_*^{b,j}(\cL D)$.
\epr
\begin{proof}
We know that $\cL D$ is open in $\rr^d\times[0,1]$. If $D^0$ is non-empty, we extend $D^t$ to a larger
family by setting $D^t=D^{-t}$ for $-1<t<0$; if $D^1$ is non-empty, we set $D^{2-t}=D^t$ for $0<t< 1$.
Then the total space of the extended family is open in $\rr^d\times (-1,2)$. Using partition of unity and Seeley's extension, we can extend each $\{f^t\}\in C^{b,j}_*(\cL D)$  to a family
$\{\tilde f^t\}$ in $C^{b,j}_*(\{ D^t\colon -1<t<2\})$. By the standard smoothing, we can get the approximation.
\end{proof}

We have provided necessary background for H\"older spaces on variable domains. In our applications,
 boundary regularities  for the $\db$-solutions will be derived only in
$C^{\bullet}(\ov{\cL D})$ spaces,
while the proof of  interior  regularity is more flexible and it will be carried out  in  $C_*^{\bullet}(\cL D)$
 and $C^{\bullet}(\cL D)$.

\setcounter{thm}{0}\setcounter{equation}{0}
\section{Narasimhan lemma and Grauert bumps for
variable domains}
\label{sect:bumps}

The main purpose of this section is to recall a construction of Grauert's bumps. We will provide precise smoothness for  the bumps with a parameter, which are needed for us to understand the boundary regularity of $\db$-solutions on variable domains in section~\ref{sect:breg}.

We need some facts about defining functions of a domain.
A bounded domain $D$ in $\cc^n$ that has $C^2$ boundary is defined by $r<0$,
where $r$ is a $C^2$ defining function defined near $\ov D$ and $\nabla r\neq0$ on $\pd D$.
Then the defining function
$\tilde r =e^{Lr}-1$ enjoys further properties. Assume that $L$ is  sufficiently large.
When $D$ is strictly pseudoconvex,
the complex Hessian of $\tilde r$  is positive definite near $\pd D$.  Note that each connected component of
$D$ is strictly convex if and only if
 the real Hessian $Hr$ is positive definite on the tangent space of $\pd D$.
The latter implies   that $H\tilde r$
is positive definite at each point of $\pd D$.  Finally, $D$ is strictly convex if and only if
$D$ is connected and $H\tilde r$ is positive-definite at each point of $\pd D$, equivalently if
$$
\RE\{\tilde r_\zeta\cdot(\zeta-z)\}\geq |\zeta-z|^2/C, \quad\forall \zeta\in\pd D, z\in\ov D,
$$
for some positive number $C$.  In our proofs, a domain may not be connected, while a convex domain is always connected.

%
\begin{lemma} \label{naragrau}
Let $j\in\ov\nn$ and $a\in\ov\rr_+$ with $j\leq a$.
Let  $\{D^t\} $ be  a smooth family of  bounded domains  in $\cc^n$ with    $C^{a+2,j}$ boundary. Assume that   $D^t$ are   strictly pseudoconvex.
For each $t_0\in[0,1]$,  there are an open neighborhood $I$ of $t_0$, a connected open neighborhood $U$
of $\ov D$ with $\pd U\in C^{a+2}$,   biholomorphic mappings $\psi_i$ from $\om_i$ onto $B_{\e_0}$,  and   smooth families $\{D_i^t\}, \{N_i^t\},\{B_i^t\}$ of bounded domains of   $C^{a+2,j}$ boundaries
 satisfying
the following$\,:$
\bppp
\item The
$\hat N_i^t:=\psi_i( {N_i^t})$ are strictly convex and relatively compact in $B_{\e_0}$, and
\ga\nonumber
D_{i+1}^t= D_i^t\cup B_{i}^t,\quad D_0^t=D^t,\quad \ov {D^t}\subset D_m^t,\quad m<\infty;\\
\label{NiDi}
N_{i}^t=D_i^t\cap B_{i}^t, \quad (\ov {B_{i}^t\setminus D_i^t})\cap (\ov{D_{i}^t\setminus N_i^t})=\emptyset.
\end{gather}
\item There exist a family $\{\Gaa^t\}\in C^{a+2,j}(\ov U)$ of embeddings $\Gaa^t$ from $\ov U$ onto $\ov{U^t}$ and compact subsets $\ov{D_i},\ov{ B_i}, \ov{N_i}$
of the open set $U$ such that
$$
\Gaa^t(\ov{D_i})=\ov{D_i^t}, \quad \Gaa^t(\ov{B_i})=\ov{B_i^t}, \quad \Gaa^t(\ov{N_i})=\ov{N_i^t}.
$$
\item Each
$D_i^t$ {\em (}resp. $\hat N_i^t)$ is  defined by $r_i^t<0$ on $U$ {\em (}resp. $\hat r_i^t<0$ on $B_{\e_0})$, and
 $r^t_i$  is strictly plurisubharmonic near $\ov U\setminus D^t_i$, and $\hat r_i^t$
is strictly convex on ${B_{\e_0}}$. Furthermore, $\{\hat r_i^t\}\in C^{a+2,j}(\ov{B_{\e_0}}\times\ov I)$ and $\{ r_i^t\}\in C^{a+2,j}(\ov U\times\ov I)$.
\eppp
\end{lemma}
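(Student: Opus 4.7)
\emph{Overall approach.} For each $t_0 \in [0,1]$ I will perform the classical Grauert bump construction at $t=t_0$ and extend to a neighborhood $I$ of $t_0$ by openness of strict pseudoconvexity and strict convexity. By Proposition~\ref{nonparem}, take a $C^{a+2,j}$ family of defining functions $\rho^t$ for $D^t$ on a neighborhood $U$ of $\ov{D^{t_0}}$; replacing $\rho^t$ by $e^{L\rho^t}-1$ for $L$ large ensures that the complex Hessian of $\rho^t$ is positive definite near $\pd D^{t_0}$, uniformly for $t$ in a neighborhood $I$ of $t_0$. For each $p \in \pd D^{t_0}$, the Levi polynomial of $\rho^{t_0}$ at $p$ serves as the first coordinate of a biholomorphism $\psi_p$ from a neighborhood of $p$ onto $B_{\e_0}$, and the standard Narasimhan computation shows $\rho^{t_0} \circ \psi_p^{-1}$ has positive-definite real Hessian at $0$, hence is strictly convex on $B_{\e_0}$ after shrinking. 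Since positive-definiteness is open in $C^{a+2}$ and $\rho^t \to \rho^{t_0}$ in $C^{a+2}$ as $t \to t_0$, after a further shrinking of $I$ the function $\rho^t \circ \psi_p^{-1}$ remains strictly convex on $B_{\e_0}$ for every $t \in I$.

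\emph{Iterative bump construction.} Cover $\pd D^{t_0}$ by finitely many Narasimhan charts $(\om_i, \psi_i)$ with $\psi_i(\om_i) = B_{\e_0}$, $i = 0, \ldots, m-1$. Pick nonnegative cutoffs $\chi_i \in C^\infty_0(\om_i)$ whose supports cover $\pd D^{t_0}$, set $r_0^t = \rho^t$, $D_0^t = D^t$, and iteratively define
$$
r_{i+1}^t = r_i^t - \e_i \chi_i, \qquad D_{i+1}^t = \{r_{i+1}^t < 0\} \cap U,
$$
with $\e_i > 0$ successively small. Choose the bumps $B_i^t$ and necks $N_i^t$ so that $\hat N_i^t = \psi_i(N_i^t)$ is a strictly convex domain on $B_{\e_0}$ with defining function $\hat r_i^t$ (e.g.\ a smooth strictly convex combination of $\rho^t \circ \psi_i^{-1}$ and a convex quadratic), and $B_i^t$ is a smooth neighborhood of $N_i^t$ inside $\om_i$ for which $D_i^t \cap B_i^t = N_i^t$ and \re{NiDi} holds. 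Smallness of $\e_i$ preserves strict plurisubharmonicity of $r_{i+1}^t$ near $\ov U \setminus D_{i+1}^t$ and, by the openness argument above, keeps $\hat r_i^t$ strictly convex on $B_{\e_0}$ for all $t \in I$. After $m$ iterations, $\ov{D^{t_0}} \Subset D_m^{t_0}$, and a final shrinking of $I$ gives $\ov{D^t} \subset D_m^t$ for all $t \in I$. All defining functions thus produced belong to $C^{a+2,j}$ on the appropriate product domains.

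\emph{Parametrized embeddings and the main obstacle.} To construct $\Gaa^t : \ov U \to \ov{U^t}$ required in part (ii), I adapt the $(iii)\!\Rightarrow\!(i)$ argument in the proof of Proposition~\ref{nonparem}: choose a smooth vector field $v$ on $\ov U$ transverse to each of the hypersurfaces $\pd D_i^{t_0}$ and $\pd B_i^{t_0}$, and define $\Gaa^t$ as the flow of $v$ for an $(x,t)$-dependent time determined implicitly from the defining functions, arranged so that $\Gaa^t$ simultaneously sends each $\pd D_i^{t_0}$ onto $\pd D_i^t$ and each $\pd B_i^{t_0}$ onto $\pd B_i^t$ (so that $N_i^t = \Gaa^t(N_i)$ automatically). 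The regularity $\{\Gaa^t\} \in C^{a+2,j}$ then follows from Lemma~\ref{chainle} together with Lemma~\ref{seeleyp}. The principal technical difficulty is the simultaneous transversality of $v$ near the common points of the various hypersurfaces: I would work locally in the strictly convex Narasimhan coordinates $\psi_i$, where $\pd D_i^{t_0}$ becomes the graph of a strictly convex function and $\pd B_i^{t_0}$ meets it transversally thanks to~\re{NiDi}, construct $v$ chart by chart using a single outward-like direction hitting all the nested boundaries transversally, and patch globally via a partition of unity subordinate to $\{\om_i\}$.
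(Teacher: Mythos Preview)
Your bump construction at $t_0$ and the openness argument for strict convexity/plurisubharmonicity are fine and close in spirit to the paper's. The real divergence is in how you obtain part~(ii), and there the order of operations you chose creates a genuine obstacle.

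You first define $D_i^t, B_i^t, N_i^t$ intrinsically (via $r_i^t=\rho^t-\sum_{j<i}\e_j\chi_j$ and analogous formulas), and then attempt to build a single $\Gaa^t$ that simultaneously carries each of $\ov{D_i^{t_0}},\ov{B_i^{t_0}},\ov{N_i^{t_0}}$ onto its $t$-counterpart. The reparametrized-flow device from Proposition~\ref{nonparem} matches \emph{one} hypersurface per flow line; for several \emph{nested} hypersurfaces one could iterate it, but $\pd D_i^{t_0}$ and $\pd B_i^{t_0}$ (hence $\pd N_i^{t_0}$) genuinely intersect --- $N_i=D_i\cap B_i$ has corners --- and along a flow line through such a corner the time needed to reach $\pd D_i^t$ and the time needed to reach $\pd B_i^t$ are independent functions of $(x,t)$. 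No single monotone reparametrization $\la\mapsto\nu(x,t,\la)$ will match both. Condition~\re{NiDi} is a separation statement, not a transversality statement, and does not resolve this.

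The paper avoids the problem by reversing the order. It carries out the entire bump construction \emph{only at $t=t_0$}, obtaining $D_i^{t_0},B_i^{t_0},N_i^{t_0},r_i^{t_0},\hat r_i^{t_0},\psi_i$. It then takes $\Gaa^t\colon\ov U\to\ov{U^t}$ from Proposition~\ref{nonparem} applied to $\{D^t\}$ alone (extended to $\ov U$ by Lemma~\ref{seeleyp} and normalized so that $\Gaa^{t_0}=\id$), and \emph{defines}
$$
D_i^t:=\Gaa^t(D_i^{t_0}),\quad B_i^t:=\Gaa^t(B_i^{t_0}),\quad N_i^t:=\Gaa^t(N_i^{t_0}),
$$
with $r_i^t:=r_i^{t_0}\circ(\Gaa^t)^{-1}$ and $\hat r_i^t:=\hat r_i^{t_0}\circ\psi_i\circ(\Gaa^t)^{-1}\circ\psi_i^{-1}$. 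Part~(ii) is then tautological, and parts~(i) and~(iii) hold for $t$ near $t_0$ because $\Gaa^t$ is $C^{a+2,j}$-close to the identity, so strict convexity of $\hat r_i^t$ and strict plurisubharmonicity of $r_i^t$ persist. This completely sidesteps the simultaneous-parametrization difficulty you flagged.
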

\begin{proof}
  We will first construct $N$ and $B$ 
for a fixed domain $D$ by some local changes of coordinates.

 We assume that the defining
function $r$ of $D$  is   $C^{a+2}$ and   strictly plurisubharmonic
near $\pd D$.
Fix a point
$p\in\pd {D}$. We want to construct a bump $B$   containing $p$
and   a biholomorphic mapping $\psi$ defined on $\ov B$ such that $\psi(\ov N)$ is
strictly convex for $N=B\cap D$. Furthermore, $D_1=D\cup B$, $B$,  and $N$
are strictly pseudoconvex with $C^{a+2}$ boundary.  We also
require that
$
 (\ov{B\setminus D})\cap(\ov{D
\setminus N})=\emptyset.
$

More precisely, let us choose a unitary matrix $S$ such that  the map
$
\var_0\colon z\mapsto S(z-p) $
sends the inner normal vector of $\pd D$ at $p$   to the $y_n$-axis. We will apply two more changes of coordinates that are uniquely determined by Taylor
 coefficients of $r_1(z):=\f{1}{2|r_z(p)|}r\circ\var_0^{-1}(z)$ at the origin. We  then specify $B$ and $N$.

Assume that $\var_0$ has been determined.
Near the origin, $D'=\var_0(D)$ is defined by $r_1<0$ with $r_1(z)=- 
y_n+O(2)$.  In
the Taylor polynomial, we have
$$
r_1=-  
y_n+\RE\sum a_{ij} z_iz_j+\sum b_{i\ov j}z_i\ov z_j
+h_1(z)
$$
with $h_1(z)=o(2)$.  Define a coordinate transformation $\tilde z=\var_1(z)$ by
$\tilde z'=z'$ and
 $$
 \tilde z_n=  
 z_n-i\sum a_{jk}z_jz_k-ib_{n\ov n}z_nz_n-  
 i\sum_{\all}b_{\all \ov n}z_\all z_n.
 $$
   Then $D''=\var_1(\cL U(0)\cap D')$ is defined by $r_2<0$ for $r_2:=r_1\circ\var_1^{-1}$. We have
$$
r_2=-y_n+\sum_{\all,\beta=1}^{n-1} b_{\all\ov\beta}z_\all\ov z_\beta+h_2(z)
$$
and $h_2(z)=o(2)$.  Define $\tilde z=\var_2(z)$
by $\tilde z_n=  z_n-   
i  z_n^2$ and $\tilde z'=z'$. Then $D'''=\var_2(\cL U(0)\cap D'')$ is defined by
$r^*<0$ with   $r^*=e^{r_2\circ\var_2^{-1}}-1$ and
$$
r^*=-y_n+  
|z_n|^2+\sum_{\all,\beta=1}^{n-1} b_{\all\ov\beta}z_\all\ov z_\beta+h(z), \quad h(z)=o(2).
$$
Obviously,  
$r^*$ is  $C^{a+2}$
and strictly convex on some $\ov {B_{\e_0}}$.

Let $\chi_0$ be a smooth convex function
vanishing solely  on $(-\infty,1]$. Let
$$
 \hat N\subset B_{\e_0}\colon \hat r(z):=r^*(z)+C^*\chi_0(\e_1^{-2}|z|^2)<0.
$$
For $C^*>0$ sufficiently large,   $\hat r$
is strictly convex on $B_{\e_0}$ and $\hat N$ is connected and
   relatively compact in $B_{\e_0}$. We remark that $\var_i$ depend
on
the first and second-order derivatives of $r$ at $p$, while $\e_0$ and $\e_1$ depend
on the least upper bound of $|\pd_zr(p)|^{-1}$ as well as on the norms of the
first and second order derivatives of $r$.
Define $\psi=\var_2\var_1\var_0$ and $N=\psi^{-1}(\hat N)$.

 Let $\chi_1$ be a smooth function
on $\rr$  that  is $1$ for $|t|<1$ and $0$ for $|t|>2$. Define
\ga\nonumber
\tilde D\colon \tilde r(z):=r(z)-\del\chi_1(\e_2^{-2}|z-p|^2)<0; \quad B=N\cup(\tilde D\setminus D).
\end{gather}
Here  $0<\e_2<\e_1/{C_*}$ for some $C_*$ that depends only on the least upper bound
of $|\pd_zr(p)|^{-1}$ and the norms of the first two derivatives of $r$;
and  $\psi(B_{\e_2}(p))$ is contained in $B_{\e_1}(0)$.
 When $\del>0$ is sufficiently small, $\tilde r$  is still strictly plurisubharmonic near $\pd D$.
 Note that  the bump $B$ covers a relatively large portion of boundary
of $D$ as
$$
 p\in \pd D\cap B_{\e_2}(p)\subset B\cap\pd D.
$$
The $\e_0,\e_1,\e_2$, and $\del$ can be chosen uniformly
when $p$  varies on  $\pd D$.

Using the {\it same}
 defining function $r$, we find finitely many points $p_1,\ldots, p_m$ in $\pd D$, the associated biholomorphic mappings $\var_{2,p_i}, \var_{1,p_i},\var_{0,p_i}$, depending on $p_i$,
    and the biholomorphisms
  $\psi_i=\var_{0,p_i}\var_{1,p_i}\var_{2,p_i}$
defined on  $\om_i=\cL U(p_i)$ such that $\psi_i(\om_i)=B_{\e_0}$, $\psi_i(p_i)=0$,
 $\psi_i(B_{\e_2}(p_i))\subset B_{\e_1}$, while $\{B_{\e_2}(p_i)\}$ is an open
 covering of $\pd D$. Set $r_0=r$ and
 \gan
  \hat N_i\subset B_{\e_0}\colon \hat r_i(z):=r_{i}^* (z)+C^*\chi_0(\e_1^{-2}|z|^2)< 0,\\
  D_{i+1}\subset\cL U(\ov D)\colon  r_{i+1}(z):=r_{i}(z)-\del^*\chi_1(\e_2^{-2}|z-p_i|^2)<0
 \end{gather*}
 with $r_i^*=e^{r_{2,p_i}\circ\var_{2,p_i}^{-1}}-1, r_{2,p_i}=r_{1,p_i}\circ\var_{1,p_i}^{-1}$, and $  r_{1,p_i}=\f{1}{2|r_z(p_i)|}r_i\circ\var_{0,p_i}^{-1}.
 $
 Set $N_i=\psi_i^{-1}(\hat N_i)$ and
 $B_i=N_i\cup( D_{i+1}\setminus D_{i})$.  When $\del^*>0$ is sufficiently small,  $r-r_i$
 have small $C^2$ norms. Thus, we may assume that the $\e_i,\delta$ have been so chosen
 that the $\hat r_i$,
$\hat N_i$ are strictly convex, $r_i$ are strictly plurisubharmonic near $\pd D_i$, and \re{NiDi} holds.
Note that $r_i$ are defined on the domain of $r$   and $r_{i+1}\leq r_i$.
Since $\{B_{\e_2}(p_i)\}$ covers $\pd D$ and $B_{\e_2}(p_i)\cap \pd D\subset D_{i+1}$, then
$\ov D\subset D_m$ as claimed.

We now consider the family $\{\ov {D^t}\}$.  Fix $t_0$. We apply the above construction
to the domain $D=D^{t_0}$.  We rename the above $D_i, N_i, B_i, \hat r_i, r_i$ by $D_i^{t_0}, N_i^{t_0}, B_i^{t_0}, \hat r_i^{t_0}, r_i^{t_0}$, respectively, while the $\psi_i$ is a biholomorphic mapping  from $\om_i$ onto $B_{\e_0}$.
 By \rp{nonparem}, we find
a family $\{\Gaa^t\}$ of embeddings from $\ov D$ onto $\ov{D^t}$, where $t$ is defined on
$\ov I$ and $I$ is a neighborhood of $t_0$ in $[0,1]$.  By the parametrized version of
Seeley extension (\rl{seeleyp}),  we may assume that  $\{\Gaa^t\}\in C^{a+2,j}(\ov U\times\ov I)$
with  $\Gaa^t$ being extended  to  embeddings from  $ \ov U$ onto $ \ov{U^t}$. Here $\ov D\subset U$.  Replacing $\Gaa^t$ by $\Gaa^t\circ(\Gaa^{t_0})^{-1}$, we may assume that $\Gaa^{t_0}$ is
the identity on $U$.
We may also assume that $\ov{D_{m+1}^{t_0}}$ is contained in $U^{t_0}$.
Fix $t_0$ and define
\gan
D_i^t=\Gaa^t (D_i), \quad
B_i^t=\Gaa^t (B_i^{t_0}), \quad
N_i^t=\Gaa^t (N_i^{t_0}), \quad
\hat N_i^t=\psi_i(N_i^t),\\
\hat r_i^t=\hat r_i^{t_0}\circ\psi_i\circ(\Gaa^t)^{-1}\circ(\psi_i)^{-1}, \quad
 r_i^t= r_i^{t_0}\circ(\Gaa^t)^{-1}.
\end{gather*}
This gives us $(ii)$.  We obtain $(iii)$ as follows.
When $I$ is sufficiently small, $\hat r_i^t$ is a strictly convex defining function of $\hat N_i^t$
on $\ov{B_{\e_0}}$ and
$r_i^t$ is a  defining function of $D_i^t$ on $U$ that is strictly plurisubharmonic near
$\ov U\setminus D^t$. Also, $\{\hat r_i^t\}\in
C^{a+2,j}(\ov{B_{\e_0}}\times\ov I)$ and $\{r_i^t\}\in C^{a+2,j}(\ov U\times\ov I)$.
 \end{proof}

  \setcounter{thm}{0}\setcounter{equation}{0}

\section{Boundary regularity for variable strictly pseudoconvex domains}
\label{sect:breg}

In this section, we study the boundary regularity of the $\db$-equation on variable strictly pseudoconvex domains.  The solutions  are obtained first for  strictly convex domains. Using a reduction procedure via Grauert's bumps,
we then apply the regularity result  to the general domains.

Let us start with a homotopy formula constructed by Lieb-Range~\cite{LR80}.
Let $D$ be a  bounded convex domain with $C^{a+2}$ boundary with $a\geq0$.  Then $D$ has a defining function $r\in C^{a+2}(\cL U(\pd D))$ that is convex near $\pd D$.  In fact,  the signed distance function $\delta_{\pd D}$ is of class $C^{a+2}$ near $\pd D$ (see~\ci{GT01}),  and it is convex in $\cc^n$ because
$
\del_{\pd D}(x)=\sup_{D\subset H}\del_H(x),
$
where $H$ are affine half-spaces in $\cc^n$ bounded by hyperplanes.
 The convexity of $D$ implies that
$$
\RE\{r_\zeta\cdot(\zeta-z)\}>0, \quad\forall \zeta\in\pd D, z\in D.
$$
(Recall that in our convention, a convex set is connected.)  Let
$
g^0(z,\zeta )=\ov\zeta-\ov z, g^1(z,\zeta )=r_\zeta,
$
and $w=\zeta-z$.  Define  \gan
\omega^\ell=\f{1}{2\pi i}\f{g^\ell\cdot dw}{g^\ell\cdot w},
\quad
\Omega^\ell=\omega^\ell\wedge(\ov\pd\omega^\ell)^{n-1},\\
\Omega^{01}=\omega^0\wedge\omega^1\wedge\sum_{\alpha+\beta=n-2}
(\ov\pd\omega^0)^{\alpha}\wedge(\ov\pd\omega^1)^{\beta}.
\end{gather*}
Here $\Om^{01}$ is $\om^0\wedge\om^1$ when $n=2$ and it is zero for $n=1$. Note that
\eq{oidb}
\om^\ell\wedge(\db\om^\ell)^\all=
\frac{g^\ell\cdot dw\wedge(\db (g^\ell\cdot dw))^\all}{(2\pi i \, g^\ell\cdot w)^{\all+1}}.
\eeq
Decompose $\Om^\ell=\sum\Om_{0,q}^\ell$ and $\Om^{01}=\sum\Om_{0,q}^{01}$,
where $\Om_{0,q}^\ell,\Om_{0,q}^{01}$
are of $(0,q)$-type  in $z$. We have
\gan
\db_\zeta\Om_{0,q}^0+\db_z\Om_{0,q-1}^0=0,   \quad q\geq 1,\quad
\db_\zeta\Om^{01}_{0,q}+\db_z\Om_{0,q-1}^{01}=\Om_{0,q}^0-\Om_{0,q}^1.
\end{gather*}
We get the homotopy formula for $(0,q)$ form $f$:
\ga\label{chtf--}
f(z)=\db_zT_qf+T_{q+1}\db_zf, \quad z\in D, \quad 1\leq q\leq n,\\
T_qf=-\int_{\pd D}\Om_{0,q-1}^{01}\wedge f+\int_D\Om_{0,q-1}^0\wedge f,
\label{chtf-}
\quad q\geq1,\\
f(z)=\int_{\pd D}f(\zeta)\Om_{0,0}^1(z,\zeta ), \quad\forall z\in D, f\in C^1(\ov D)\cap\mathcal A(D).
\label{lerayf-}
\end{gather}
(See~\cite[p.~273]{CS01}.)

 The formulas  \re{chtf--}-\re{lerayf-} are valid, provided that $r_\zeta$ can be replaced by a
{\it Leray map}   $g^1(z,\zeta )$, i.e. it is
 holomorphic in $z\in D$ and
\eq{leray}
g^1(z,\zeta )\cdot(\zeta-z)\neq0, \quad\forall\zeta\in\pd D,  z\in D.
\eeq
  Note that $r_\zeta$ is never a Leray map when $D$ is not connected.  A Leray map $g^1$ always exists when $D$
  has strictly pseudoconvex $C^2$ boundary.  In the latter case there is another homotopy formula constructed via \re{leray}, where $T_qf$, restricted to a component $\tilde D$ of $D$, is defined by \re{chtf-} in which $D$ is replaced by $\tilde D$. Furthermore, for such a homotopy formula, one only needs a mapping $g^1$ satisfying
$$
g^1(z,\zeta )\cdot(\zeta-z)\neq0, \quad\forall\zeta\in\pd \tilde D,  z\in \tilde D
$$
for each component $\tilde D$ of $D$.
\begin{rem}\label{2leray} With a Leray mapping satisfying \re{leray}, we   have the Leray formula
$$
f(z)=\int_{\pd D}f(\zeta)\Om_{0,0}^1(z,\zeta )=\int_{\pd\tilde D}f(\zeta)\Om_{0,0}^1(z,\zeta ), \quad\forall z\in\tilde D,
$$
for $ f\in C^1(\ov D)\cap\mathcal O(D).$ However, the first integral representation is more convenient in holomorphic approximation.
\end{rem}
 Note
that the  classical solution operator $T_q$ can be estimated for $\db$-closed
$(0,1)$ forms; see Siu~\cite{Si74}. For $(0,q)$ forms
 we recall a $\db$-solution operator  $S_qf$  due to Lieb-Range~\cite{LR80}.
 We
reformulate the Lieb-Range solution operator in terms of the Leray-Koppelman forms
  for a  convex domain.

  Recall that the Seeley extension $Ef$
  for a differential form $f$ on $\ov D$ is obtained by applying $E$
  to  the coefficients of the form.
  \pr{}  Let   $1\leq q\leq n$.
Let $D\subset\cc^n$ be a bounded convex domain with $C^{a+2}$ boundary.  Let $r\in C^{a+2}(\cL U(\pd D))$
be a defining function of $D$. 
Suppose that $f\in C^{1}_{(0,q)}(\ov D)$ is
 $\db$-closed  on
$\ov D$. Let $Ef$ be a $C^1$ Seeley extension of $f$ that has compact
support
in $\cL U(\ov D)$.  Then $\db S_qf=f$ on $D$ for
\ga\label{tsqf}
S_qf=L_qEf+K_q\db Ef,\\
L_qEf= \int_{\cL U(\ov D) }\Om^0_{0,q-1}\wedge Ef,
\quad K_q\db Ef=
 \int_{\cL U(\ov D)\setminus D}\Om_{0,q-1}^{01}\wedge \db_\zeta Ef.
 \label{leqe}
\end{gather}
\epr
\begin{proof}  Let us modify the solution operator $T_q$   given by \re{chtf--}-\re{chtf-}.
The $\Om^{01}$ has total degree $2n-2$.
Since $Ef$ has compact support in $\cL U(\ov D)$, we apply Stokes' formula and get
\aln
&-\int_{\pd D}\Om_{0,q-1}^{01}\wedge f=\int_{\cL U(\ov D)\setminus D}\db_\zeta\Om_{0,q-1}^{01}\wedge f
+\int_{\cL U(\ov D)\setminus D}\Om_{0,q-1}^{01}\wedge\db Ef\\
&\ =-\int_{\cL U(\ov D)\setminus D}\left(\db_z\Om_{0,q-2}^{01}\wedge f-\Om_{0,q-1}^0\wedge f+\Om_{0,q-1}^1\wedge f\right)
+\int_{\cL U(\ov D)\setminus D}\Om_{0,q-1}^{01}\wedge\db Ef\\
&\
=-\db_z\int_{\cL U(\ov D)\setminus D}\Om_{0,q-2}^{01}\wedge f+\int_{\cL U(\ov D)\setminus D}
\left(
\Om_{0,q-1}^0\wedge f-\Om_{0,q-1}^1\wedge f
+ 
\Om_{0,q-1}^{01}\wedge\db Ef\right).
\end{align*}
Let us look at the $4$ integrals after the  last equal sign.
To modify the solution operator, we remove the first integral as $\db^2=0$. The third integral of the $4$ terms
is $0$ when $q>1$, or holomorphic when $q=1$.
In the latter case,
 we   remove it. We end up with two integrals that do not involve boundary integrals.
Moreover, the second integral,
after   combined with the last integral in \re{chtf-},
 is   over the domain $D\cup(\cL U(\ov D)\setminus D)=\cL U(\ov D)$.
We have verified \re{tsqf}.
\end{proof}


We now apply $S_qf$ to our first case where   $D^t$ are strictly convex. More precisely, we assume
that   $D^t$ are strictly convex and have defining functions  $r^t$    on $U^t$ which are strictly convex near $\ov{U^t}\setminus D^t$, while $\{r^t\}\in C^{2,0}\{\ov{U^t}\}$.
We replace the above $D,\cL U(\ov D), r, E$,  $g^1$ by  $D^t$, $U^t$,  $r^t$,  $E^t$,
$\DD{r^t}{\zeta}$ respectively.
Let $S_q^tf$ be the operator $S_q$
applied to $(0,q)$ form $f^t$ on $D^t$.  Thus, we have
\eq{stqf}
S^t_qf=L_q^tEf+K_q^t\db Ef,
\eeq
where $L_q^tEf$, $K_q^t\db Ef$ are
 defined by \re{leqe} in which $Ef$, $\cL U(\ov D)$ are  replaced by $E^tf$, $\ov {U^t}$ respectively.

In real coordinates $x,\xi$, we will write $z_j=x_j+ix_{n+j}$ and
$\zeta_j=\xi_j+i\xi_{n+j}$. Recall that $\hat\pd_\xi^ir^t$
  denotes the set of derivatives of $r^t(\zeta)$ of order at most $i$.
   In view of \re{oidb},  we can express the coefficients of $S_q^tf$ as follows.
\pr{klformula} Let $L_q^t, K_q^t$ be given by \rea{leqe} and \rea{stqf}.
 The coefficients of the $(0,q-1)$ form
 $K_q^t\db Ef(z)$ are
  $\cc$-linear combinations of
\eq{ktfz}
 K^tf_1(z):=\int_{\xi\in U^t\setminus D^t}f_1^t(\xi)\f{A(\hat\pd^2_{\xi}r^t,\xi,x)(\xi_i-x_i)}{(r^t_{\zeta}\cdot(z-\zeta))^{n-m}|\zeta-z|^{2m}}\, dV(\xi), \quad\forall x\in D^t
 \eeq
with $m=1,\ldots, n-1$ and $i=1,\dots, 2n$, where $f_1^t$  is a    coefficient of $\dbar E^tf$,
 $A$ is a polynomial,   and $dV$ is the standard volume-form on $\cc^n$.
The coefficients of $(0,q-1)$ form $L_q^tf(z)$ are $\cc$-linear combinations of
 \eq{ltfz}
L^tf_0(z):=\int_{\xi\in U^t}  f_0^t(\xi)\f{\xi_i-x_i}{|\zeta-z|^{2n}}\, dV(\xi), \quad 1\leq i\leq 2n,
\eeq
where   $f_0^t$ are coefficients of $E^tf$. The coefficients for the $\cc$-linear combinations  are universal and independent of $t$.
 \epr
By the proposition, it suffices to show that $\{K^tf_1\}$, $\{L^tf_0\}$ are in $C^{k+1/2,j}(\ov{\cL D})$,
when $\{f_1^t\},\{f_0^t\}$ are in $C^{k,j}(\{\ov{U^t}\})$.

We first state the following interior  estimate.
It is valid for $C_*^{k+\all, j}$ norm for  integers $k,j$. See~\cite{We89} for  fixed domains.
\pr{estL} Let $j,k\in\nn$ and  $0<\all<1$. Let $\{{U^t}\}$
  be a smooth family of domains
   of $C_*^{k+1+\all,j}$ boundary. Let $\{f^t\}\in C_*^{k+\all,j}(\{\ov{U^t}\})$ and
let $\{L^tf\}$ be defined by \rea{ltfz}.  If $\cL D$ is a relatively compact open subset of the total space
$\cL U$ of $\{ U^t \}$, then for  $s_{k}\leq Ck$,
$$
|Lf|_{\cL D;k+1+\all,j}\leq C\dist(\cL D,\pd \cL U)^{-s_k} |f|_{\cL U;k+\all,j}.
$$

\epr
\begin{proof}  This follows directly from the classical estimates for the Newtonian potential.    Since $\cL D$
is relatively compact in $\cL U$, we find a smooth function $\chi^t(x)$ in $t,x$ such that $\chi^tf^t$ has compact support in $\cL U$, while $\chi^t(x)$ equals $1$ near $\ov{\cL D}$.  We may assume that $f^t=\chi^tf^t$ and that for   $\{f^t\} \in C_*^{k+\all,j}$ we have
$$
\pd_t^j\{L^tf(z)\}=\int_{B_R} \pd_t^j\{ f^t(\xi)\}\f{\xi_i-x_i}{|\zeta-z|^{2n}}
\, dV(\xi), \quad z\in\ov{ D^t}\subset B_R.
$$
The estimate in $x$ derivatives is then classical.
For detail, see~\ci{GT01} (pp.~54-59).
\end{proof}
When $n=1$, we have $S_1f=L_1Ef$ in \re{tsqf}. Thus we have proved the following one-dimensional result.
\th{spc1} Let  $j,k\in\ov\nn$ with  $k\geq j$.
Let    $\{D^t\}$ be  a smooth family of non-empty bounded domains  in $\cc$ with $C^{k+1+\all,j}$ boundary.
Assume that   $f^t$ are   $(0,1)$ forms on $D^t$ with  $f=\{f^t \}
\in C^{k+\all,j}(\ov{\cL D} )$.
Then exists  linear solution operators $u^t=S^tf$ to  $\db u^t=f^t$ on $D^t$   so that
$\{S^tf\}\in C^{k+1+\all,j}(\ov{\cL D} )$. \eth

We now estimate the boundary integral to gain $\f{1}{2}$ in the H\"older exponent.
\pr{estK} 
Let  $j,k\in\nn$ with $j<k$.  Let   $\{U^t\}$
  be a smooth family of bounded domains with $C^{k+1,j}$  boundary.  Suppose that each $D^t$ is non empty
and  relatively compact
in $ U^t$ and it is defined by $r^t<0$ on $U^t$ with $dr^t\neq0$ on $\pd D^t$. Suppose that the real Hessian of $r^t$ is strictly positive-definite on $\ov {U^t}\setminus D^t$ and $\{r^t\}$ is of class $C^{k+1,j}(\{\ov {U^t}\})$.
 Let $\{K^tg\}$ be defined by \rea{ktfz}.
If $g \in C^{k-1,j}(\{\ov {U^t}\})$, $g^t\equiv0$ on $D^t$,  then
$$
\|Kg  \|_{\cL D; k+1/2,j}\leq C \|g \|_{\cL U;k-1,j}.
$$
where $\cL D$, $\cL U$ are respectively the total spaces of $\{D^t\}, \{U^t\}$.
\epr
\begin{proof}  By \rp{nonparem} we know that $\{D^t\}$ is a family of bounded domains of   $C^{k+1,j}$
 boundary. Using a cut-off function we may assume that each $g^t$ has a compact support in a small neighborhood of $\pd D^t$.      Thus the integral $K^tg$ is over a fixed bounded domain, which simplifies the computation of   $t$ derivatives.

 Note that $r^t_{\zeta}\cdot(\zeta-z)\neq0$,  for $z\in D^t$ and $\zeta\in U^t\setminus D^t$.
 The latter contains
the support
of $g^t$.  First, we will take the derivatives on the integrand directly.
We denote by $N_\nu(x)$ a monomial of degree $\nu$ in $x$.  Let $A(w)$ denote a polynomial in $w$.  Also,
the $A$ might be different when
it recurs.
 Let us write
\gan
K^tg(z):=\int_{\cc^n} g^t(\xi )\f{A(    \hat\pd_{\xi }^2r^t,\xi,x)N_{1}(\xi -x )}
 {(r^t_{\zeta }\cdot(\zeta -z ))^{n-\ell}|\zeta -z |^{2\ell}}\, dV(\xi),\quad   z\in D^t.\end{gather*}
Note that $\{g^t\}$ is only in $C^{k-1,j}$.
We  first compute $\pd_t^i\pd_x^{k_1}\{K^tg(z )\}$ for $i\leq j$ and $i+k_1<k$. We  then apply   the integration by parts to derive a new formula.
Finally we  compute two more derivatives to derive the $\f{1}{2}$-estimate by using the Hardy-Littlewood lemma.

We   write
$
\pd_{x}^{k_1}\{K^tg(z )\}$ as a linear combination of  $  K_1^tg(x)$ with
\ga\nonumber
K_1^tg(x):=\int_{\cc^n} g^t(\xi )\f{A(  \hat\pd^2_{\xi }r^t,\xi,x)
N_{1-\mu_0+\mu_2}(\xi -x )}
 {(r^t_{\zeta }\cdot(\zeta -z ))^{n-\ell+\mu_1}|\zeta -z |^{2\ell+2\mu_2}}\, dV(\xi),\\
 \label{aaa}
\mu_0+\mu_1+\mu_2\leq k_1, \quad 1-\mu_0+\mu_2\geq0.
 \end{gather}
It suffices to estimate
$
\pd_t^i  \{K_1^tg(x)\}$, which
is a linear combination of
\ga\label{Jzt}
 J^t(x):=\int \f{ \hat g^t(\xi )  N_{1-\mu_0+\mu_2+j_2}( \xi-x)}
 {(r^t_{\zeta}\cdot(\zeta-z))^{n-\ell+\mu_1+j_2}|\zeta-z|^{2\ell+2\mu_2}}\,  dV(\xi)
 \end{gather}
 with \eq{hatg}
 \hat g^t(\xi )
 =  \pd_t^{j_0}(g^t(\xi ))
 A(  \pd_t^{j_1} \hat \pd^2_{\xi}r^t,\pd_tr_\zeta^t,   \xi,x).
\eeq
 Furthermore,
    $\mu_0,\mu_1,\mu_2$ satisfying \re{aaa} and
 $$
 j_0+j_1+j_2\leq i, \quad i\leq j,\quad
 i+k_1\leq k-1.
 $$

   Next, we will apply the integration by parts to  reduce the exponent
    of $r_{\zeta}^t\cdot(\zeta-z)$  to  $n-\ell$.
   This requires us to transport the derivative in $t$ to derivatives in $\xi$.
Thus,    we need  the space $C^{\bullet}$ instead of $C^{\bullet}_*$.
     To this end we write \re{Jzt}   as
 \ga
  J^t_{m}(x)=\int_{\cc^n}\f{ \tilde g^t(x,\xi)}{(r^t_{\zeta}\cdot(\zeta-z))^{n-\ell+m}}\, dV(\xi),\quad m=\mu_1+j_2,
\label{mm1}\\
  \tilde g^t(x,\xi)= \hat g^t(\xi )\f{  N_{1-\mu_0+\mu_2+j_2}(\xi-x)}
 { |\zeta-z|^{2\ell+2\mu_2}}.
\label{tildeg}
 \end{gather}
 Using a
 partition of  unity in $(\zeta, t)$ space, we may assume that $  g^t(\zeta)$ has compact support in a small ball $B$
 centered at $(\zeta_0,t_0)$, and on $B$
 $$
 \pd_{\zeta}\left\{r^t_{\zeta}\cdot(\zeta-z)\right\}\neq0, \quad u^t(x,\xi):=\pd_{\xi_\beta}
\left\{r^t_{\zeta}\cdot(\zeta-z)\right\}\neq0,
  $$
  for some $\beta$.  Since $ g^t(\zeta)$ is $0$ on $D^t$ and has a compact support in $B$,
  we apply Stokes' theorem and obtain that,  up to a constant multiple
  $$
  J^t_{m}(x)=\int_{\cc^n} \f{\pd_{\xi_\beta}\{u^t(x,\xi)^{-1}\tilde g^t(x,\xi)\}}{(r^t_{\zeta}\cdot(\zeta-z))^{n-\ell+m-1}}\, dV(\xi),
  \quad\forall z\in D^t. $$
 Repeating this shows that up to a constant
 $$
 J_{m}^t(x)=\int_{\cc^n} \f{v^t(x,\xi)}{(r^t_{\zeta}\cdot(\zeta-z))^{n-\ell}}\, dV(\xi),
  \quad\forall z\in D^t$$
 with
 \eq{vtxx}
 v^t(x,\xi):=
 (\pd_{\xi_\beta}\circ u^t(x,\xi)^{-1})^{m}\{ \tilde g^t(x,\xi)\}. 
   \eeq

Since $\tilde g^t(x,\xi)=0$ for $\xi\in D^t$, it is easy to see that $J^t_m(x)$, $\pd_xJ^t_m$ are continuous on $\mathcal D$.  To show that $\{K^tg\}$ is in $C^{k+1/2,j}(\{\ov{ D^t}\})$,  by Hardy-Littlewood lemma it suffices to verify
 $$
 |\pd_{x}^2J^t_m(x)|\leq C\dist(x,\pd D^t)^{-1/2}, \quad \forall x\in D^t.
 $$
 Since $g^t(x)= 0$ for $x\in D^t$, we  obtain that  for $ b\leq j, a+b\leq k-1$
\aln
|\pd_{\xi_\beta}^{a}\pd_t^{b}\{ g^t(\xi )\}|&=|\pd_{\xi_\beta}^{a}\pd_t^{b}\{ g^t(\xi )\}-\pd_{x_\beta}^{a}\pd_t^{b}\{ g^t(x)\}|\leq C\|g\|_{\cL U; k-1,j}|\xi-x|^{k-1-a-b}.
\end{align*}
Thus, $|g^t(\xi )|\leq C|\xi-x|^{k-1}$, $|\hat g^t(\zeta)|\leq C|\xi-x|^{k-1-j_0}$ by \re{hatg}, and
$|\tilde g(x,\xi)|\leq C|\xi-x|^{\nu}$ by \re{tildeg}, for
$$
\nu=(k-1-j_0)+(1-\mu_0+\mu_2+j_2)-(2\ell+2\mu_2).
$$
By \re{vtxx}, we get
\ga\nonumber
|v^t(x,\xi)|\leq C(\|r\|_{\cL U;k+1,j})\|g\|_{\cL D; k-1,j}|\xi-x|^{\nu'}, \quad \nu'=\nu-m.\end{gather}
Recall from \re{mm1} that $m=\mu_1+j_2$. Hence\aln
\nu'&=
 (k-1-j_0)+(1-\mu_0+\mu_2+j_2)-(2\ell+2\mu_2 )-(\mu_1+j_2)\\
&= k-j_0-\mu_0-2\ell-\mu_2-\mu_1\geq k-i-k_1-2\ell\geq 1-2\ell.\end{align*}
 We    obtain similar estimates for $|\pd_{x}^sv^t(x,\xi)|$. In summary, we have
\gan
|\pd_{x}^s v^t(x,\xi)|\leq C\|g\|_{\cL D; k-1,j}|\xi-x|^{(1-2\ell)-s}, \quad s=0,1,2.
\end{gather*}
Here $C$ depends on $\|r\|_{\cL U;k+1,j}$.
 Now, $\pd_{x}^2J_m(x)$ is a linear combination of
 \begin{gather*}
 J^t_{m,0}(x):= \int_{\cc^n}
 \f{\pd_{x}^2\left\{v^t(x,\xi)\right\}}{(r^t_{\zeta}\cdot(\zeta-z))^{n-\ell}}\, dV(\xi),
 \\
   J^t_{m,1}(x):= \int_{\cc^n}
 \f{A(r^t_{\zeta},\xi,x)\pd_{x}\left\{v^t(x,\xi)\right\}}{(r^t_{\zeta}\cdot(\zeta-z))^{n-\ell+1}}\, dV(\xi),
 \\
  J^t_{m,2}(x):= \int_{\cc^n}
 \f{A(r^t_{\zeta},\xi,x)v^t(x,\xi)}{(r^t_{\zeta}\cdot(\zeta-z))^{n-\ell+2}}\, dV(\xi),
  \end{gather*}
for $ x\in D^t.$
Therefore, we obtain
\ga\label{Jmiz}
 |J^t_{m,i}(x)|\leq C\int_{U^t\setminus D^t}\f{d V(\xi )}{|r^t_{\zeta}\cdot(\zeta-z)|^{n-\ell+i}|\zeta-z|^{2\ell +1-i}}.
 \end{gather}
Since
$|r_{\zeta}^t\cdot(\zeta-z)|\geq C|\zeta-z|^2$,   it suffices to estimate  \re{Jmiz} for $\ell=n-1$.  We have
$$
|J^t_{m,i}(x)|\leq 
C\int_{U^t\setminus D^t}
 \f{1}{|r^t_{\zeta}\cdot(\zeta-z)|^{i+1}|\zeta-z|^{2n-i-1}}\, dV(\xi ), \quad  i=0,1,2.
 $$
Then the last integrals are bounded by $C\dist(z,\pd D^t)^{-1/2}$. For the further detail, see Lieb-Range
  (the estimates of $J_k(z)$ in~\cite{LR80},  pp. 155--166.)
\end{proof}

To study regularity of $\db$-solutions for variable domains, we need to introduce the following.
\defn{A
 family $\{E^t\}$ of subsets in a topological space $X$ is
  {\it 
  upper semi-continuous}, if
for each $t$ and every open neighborhood $U$ of $E^t$ in $X$, we have
$
E^s\subset U
$
when $|s-t|$ is sufficiently small.
}

It is easy to see that the family $\{X\setminus E^t\}$ might not be upper semi-continuous in $X$ when $\{E^t\}$ is  upper semi-continuous in $X$. \begin{rem}The family    of boundaries of domains defined by $r^t<c$
is not necessarily upper
semi-continuous.  \end{rem}
\le{upss}
A family   $\{K^t\}$ of compact sets $K^t$ in $\rr^d$ is upper semi-continuous
if and only if its total space $\cL K$ is compact. In particular, if  $\{K^t\}$ has a compact total space $\cL K$ and
the total space of $\{\om^t\}$ is an open subset of $\cL K$, then  $\{K^t\setminus\om^t\}$ is upper semi-continuous.
\end{lemma}
\begin{proof}
Obviously,
the last assertion follows from the first assertion. Suppose that $\{K^t\}$ is upper-semi continuous with total space $\cL K$. If $\cL K$ is not compact,  there is a sequence $(x_m,t_m)\in\cL K$ that does not admit any convergent subsequence with limit in $\cL K$. We may assume that $t_m\to t_0$ as $m\to\infty$. Since $K_{t_0}$ is bounded,   it is contained in an open ball $U$ of finite radius. By the upper-semi continuity, we know that $K_{t_m}\subset U$ for $m$ sufficiently large. We may assume that $x_m\to x_0$.
Then $x_0$ is not in $K_{t_0}$.  Take another open set $U'$ containing $K_{t_0}$ such that $x_0$ is not in $\ov{U'}$.
By the upper semi-continuity, we have $K_{t_m}\subset U'$. Then $x_0$ is in $\ov{U'}$, a contradiction.

  Suppose now that $\cL K$ is compact. Fix $t_0\in[0,1]$. Let $U$ be an open neighborhood of $K^{t_0}$ in $\rr^d$. Suppose that $K^{t_m}$ is not contained in $U$  for a sequence  $t_m\to t_0$. Take  $x_m\in K^{t_m}\setminus U$. Since $\cL K$
is compact, taking a subsequence if necessary we conclude that $(x_m,t_m)$ tends to $(x_0,t_0)\in\cL K$. This shows that $x_0\in K^{t_0}\setminus U$ and  the latter is non-empty, a contradiction.
\end{proof}

\th{spc} Let  $j,k\in\ov\nn$ with  $k-1\geq j$.
Let    $\{D^t\}$ be  a smooth family of non-empty bounded domains  in $\cc^n$ with $C^{k+1,j}$ boundary.  Assume that      $D^t$ are strongly pseudoconvex.
Assume that   $f^t$ are $\db$-closed $(0,q)$ forms on $D^t$ with  $q>0$ and $f =\{f^t \}
\in C^{k,j}(\ov{\cL D} )$.
There exist  linear solution operators $u^t=S^tf$ to  $\db u^t=f^t$ on $D^t$   so that
$\{S^tf\}\in C^{k+1/2,j}(\ov{\cL D} )\cap C^{k+\e,j}(\cL D)$ for all $\e<1$. \eth
\begin{proof}
We first consider the case when all   $\ov{D^t}$ are strictly convex.  By \rp{nonparem}
we can find defining functions $r^t$ for $D^t$, where $\{r^t\}\in C^{k+1,j}(\ov{\cL U})$;
and $r^t$ have positive-definite   real Hessian on $\pd D^t$ replacing $r^t$ by $e^{Cr^t}-1$ if necessary.
  Then we have homotopy formula \re{stqf} that provides
a solutions operators $S^t$. The regularity follows from Propositions~\ref{estL} and \ref{estK}.

The proof for the general case consists of two steps.
We   first use the bumps in \rl{naragrau}  and the theorem for the strictly convex
domains to  extend $\{f^t\}$ to a family of $\db$-closed forms on  larger domains.
 We then solve the $\db$-equation on
a fixed large domain by using the classical homotopy formula.
Note that we only constructed bumps uniformly in $t$ for $t$ close to a given value. Thus, we will first
define the solution operator $S^t$ locally in $t$ and we will then define $S^t$ for all $t$ by using a partition of unity in parameter $t$.

We recall the construction  from \rl{naragrau}. Fix $t_0$. We can find a connected neighborhood $I$
of $t_0$ such that when restricting $t$ to $\ov I$ we have the following:
there are  finitely many
strictly pseudoconvex domains $D_i^t, B_i^t, N_i^t$  with $C^{k+2}$ boundary  such that
\ga\nonumber
D_{i+1}^t= D_i^t\cup B_{i}^t,\quad \ov {D^t}\subset D_m^t,\quad
N_{i}^t=D_i^t\cap B_{i}^t,\quad D_0^t=D^t;\\
\label{sepa}
  (\ov {B_{i}^t\setminus D_i^t})\cap (\ov{D_{i}^t\setminus N_i^t})=\emptyset,\\
  \Gaa^t(D_i)=D_i^t, \quad \Gaa^t(N_i)=N_i^t, \quad \Gaa_i^t(B_i)=B_i^t
  \label{sepa+}
  \end{gather}
and there exists a biholomorphic mapping $\phi_i$ from $\om_i$ onto $B_\e$, independent of $t$,
 such that
$\hat N_i^t:=\psi_i({N_i^t})$  is strictly convex.  Furthermore, $B_i, N_i$ and $D_i$ are relatively compact
in $U$ and $\{\Gaa^t\}\in C^{k+1,j}(\ov U)$, and
the $D_i^t$ {(}resp. $\hat N_i^t)$ is defined by $r_i^t<0$ {(}resp. $\hat r_i^t<0${)}.
The  $\{r_i^t\}$  is  in $ C^{k+1,j}(\ov{U})$ and $\hat r_i^t$ is strictly convex on $\ov{ B_{\e}}$.  Here $U$ contains $\ov {D^t}$ and has $C^{k+1}$ boundary.

 Let $S_{\hat N_i^t}$ be the Lieb-Range solution operator determined by $\hat r_i^t$
 for $\hat N_i^t$. We pull back the solution operator
 to  $N_i^t$ and define $S^t_{ N_i}g:=(\psi_i)^*S_{\hat N^t_i}(\psi_i^{-1})^*g^t$.

By \re{sepa+} we know that
$$
  (\ov{B_i^t\setminus    D_i^t})\cap\pd D_i^t=\Gaa^t((\ov{B_i\setminus   D_i})\cap\pd D_i),
\quad
\ov{D_{i}^t\setminus N_i^t}=\Gaa^t(\ov{D_{i}\setminus N_i})
 $$
 are upper semi-continuous in $\cc^n$.
Thus we can   find open  neighborhoods $U_i^0, U_i^1$
 of $(\ov{B_i^{t}\setminus  D_i^{t}})$ such that $\ov{U_i^1}\subset U_i^0$ and $\ov{U_i^0}\cap(\ov{D_i^{t}\setminus   N_i^{t}})=\emptyset$ for  $t$ near $t_0$.
 Now we   find   a smooth function $\chi_i$  that
 has compact support in $U_i^0$ such that $\chi_i=1$ near $U_i^1$. Since we have only finitely many families $\{N^t_i\}$, the above construction of $S^t_{N_i}$ is valid for all $t$ near $t_0$ and all $i$.
 We then define
 \gan
 f_1^t=f^t-\db(\chi_0S^t_{N_0}f)=(1-\chi_0)f^t-(\db\chi_0)\wedge S^t_{N_0}f.
  \end{gather*}
The last identity implies that  $f_1^t$ vanishes near $\ov{(B^t_0\setminus D_0^t)}$. We extend $f_1^t$ to be zero on $\ov{D_1^t}\setminus D_0^t$. Then $f_1^t$ is $\db$-closed on $D_1^t$, and
$\{f_1^t\}\in C^{k,j}(\{\ov {D_1^t}\}).$
We define $f_{i+1}^t=f_{i}^t-\db(\chi_iS^t_{N_i}f_i)$ and extend it to   zero on $\ov{D_{i+1}^t}\setminus D_i^t$. We can write
  $$
  f_m^t=f^t-\db g^t,\quad g^t=\sum_{i=0}^{m-1}\chi_i S^t_{N_i}f_i.
  $$
We have $\{g^t\}\in C^{k+1/2,j}(\ov{\cL D})$, while
 $\{f_m^t\}\in C^{k,j}(\{\ov{D_{m+1}^t}\})$.   Since $\chi_i$ is contained in $ N_i^t$, then
 $\{\chi_iS_{N_i}^tf\}\in C^{k+\e,j}(\cL D)$ for all $\e<1$. This shows that $\{g^t\}$
 is in $C^{k+1/2,j}(\ov{\cL D})\cap C^{k+\e,j}(\cL D)$.

 Again, for the fixed $t_0$, we can find a strictly pseudoconvex domain $D_*$
of $C^2$ boundary such that $\ov{D^t}\subset D_*$ for $t$ near $t_0$.   Let $T$ be the solution operator from the classical homotopy formula   on $D_*$. By the interior regularity property of $T_{D^*}$, we get $\{T_{D_*}f_m^t\}\in C^{k+\e}(\ov D_\Gaa)$.
Then $S^tf:=T_{D_*}f_m^t+g^t$ is a solution operator of the desired property for $t$ near $t_0$. Using a partition of
unity $\{\tilde\chi_i\}$ on $[0,1]$ with $\supp\tilde\chi_i\subset(a_i,b_i)$, we obtain a solution operator $\sum\tilde\chi_i(t) S_{i}^t$, where
$S_{i}^t$ is defined for $t\in(a_i,b_i)$. Then $S^t:=\sum\tilde\chi_i(t)S_{i}^t$ has the desired properties.
  \end{proof}

\setcounter{thm}{0}\setcounter{equation}{0}

\section{Henkin-Ram\'irez functions for variable strictly pseudoconvex open sets}
\label{sect:hfp}


In this section we will construct a family of Henkin-Ram\'irez functions for variable strictly pseudoconvex open sets.
As an application, we will  find homotopy formulas for a smooth family  $\{D^t\}$ of strictly pseudoconvex domains.


The following  theorem is on    Henkin-Ram{\'{\i}}rez functions with parameter.
\begin{thm}\label{hrfn}
Let $a,b\in\ov\rr_+$, $j\in\ov\nn$, and $j\leq a$.
 Let
$\{\om^t\}, \{D^t\}, \{U^t\}$ respectively be  continuous families of domains  with
bounded total spaces $\om,\cL D, \cL U$.
 Suppose that $\om$ is relatively
compact in $\cL U$.
 Let $\{ r ^t\}$ be of class $C^{a+2,j}(\cL U)\ ($resp. $C^{b+2,j}_*(\cL U))$. Suppose that  $ r ^t$
are strictly plurisubharmonic on   $\om^t$.
 Let
  $
C^t:=\om^t\cap\{ r ^t=0\}.
  $
   Suppose that for
 each $t\in[0,1]$
 \ga
 \label{ctn0}
C^t \neq\emptyset,
 \quad  \pd D^t\subset C^t\subset\subset\om^t,
  \\
\label{rtl0}
  r ^t<0 \ \text{on $D^t
 $}, \quad r^t>0\  \text{on $\om^t\setminus (D^t\cup C^t)$},\quad
 r ^t>\del_0\  \text{on $U^t\setminus (D^t\cup\om^t)$}
\end{gather}
with $\del_0>0$.  For $\del>0$, set $D^t_\delta:=D^t \cup\{z\in\om^t\colon  r ^t(z)<\delta\}$,
$D_{-\del}^t:=\{z\in D^t\colon  r ^t(z)<-\del\}$,  and
$V^t_\del:=\{z\in D^t \cup\om^t\colon | r ^t(z)|<\del\}$.
 Assume that
\ga \label{lamb0}
\sum\DD{^2 r ^t}{\zeta_j\pd \ov\zeta_k}s_j\ov s_k\geq\la_0|s|^2,\quad\forall \zeta\in \ov{\om^t},\\
\label{pd2rho}
|\pd^2_\zeta r ^t-\pd_z^2 r ^t|<\f{\la_0}{C_n}, \quad \forall \zeta\in V^t_{\del_0},z\in\om^t\cup D^t, |\zeta-z|<d_0
\end{gather}
for some $\la_0$ and $d_0>0$ with $0<\la_0\leq|r|_{2,0}$.
Let   $0<\del_1<\del_0$ and
\eq{dd0e}
d=\min\Bigl\{d_0,\dist(  V^t_{\del_1}, \pd\om^t\setminus D^t)\colon t\in[0,1]\Bigr\},\quad
\e=\min\Bigl\{ \f{\la_0}{64}d^2,  \del_1\Bigr\}.
\eeq
   Then $d>0$ and there exist
   functions $\Phi^t(z,\zeta)$ and
  $$
   F^t(z,\zeta)
  =-\sum\DD{ r ^t}{\zeta_j}(z_j-\zeta_j)-\f{1}{2}\sum b^t_{jk}(\zeta)(z_j-\zeta_j)(z_k-\zeta_k)
$$
  so that  for $z\in D^t_{\e/2}$ and $\zeta\in D^t_{\del_1}\setminus D^t_{-\e}$ the following  hold:
 \bppp
 \item    The functions $\Phi^t(z,\zeta)$  are  holomorphic  in $z$,
and $\Phi^t(z,\zeta)\neq0$ for $z\neq \zeta$ and $ r ^t(z)\leq r ^t(\zeta)$.
\item If    $|\zeta-z|<\epsilon$, there exist $M^t(z,\zeta )\neq0$  such that
$\Phi^t(z,\zeta )=F^t(z,\zeta )M^t(z,\zeta )$ and
  \ga{}
  \label{elpp}
  \RE F^t(z,\zeta)\geq  r ^t(\zeta)- r ^t(z)+\f{\la_0}{4}|\zeta-z|^2,\quad \text{if $ |\zeta-z|<d,
  z,\zeta \in D^t_{\del_1}$}.
  \end{gather}
  \item
 The families  $\{\Phi^t\}$,  $\{M^t\}$ are in  $C^{a+1,j}(\{ \ov{D_{\e/2}^t}\times(\ov{D^t_{\delta_1}}\setminus D^t_{-\e})\} )$ $($resp.  $C_*^{b+1,j}(\{ \ov{D_{\e/2}^t}\times(\ov{D^t_{\delta_1}}\setminus D^t_{-\e})\} )$.
 \eppp
  \end{thm}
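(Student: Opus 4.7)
The plan is to adapt the classical Henkin-Ram\'irez construction of $\Phi^t$ from the Levi polynomial $F^t$, with $(\zeta,t)$ treated jointly as parameters, and with the parametric $\db$-regularity from \rt{spc} applied at the key step.

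First, I would set $b^t_{jk}(\zeta):=\pd^2r^t/\pd\zeta_j\pd\zeta_k(\zeta)$, so that $F^t(z,\zeta)$ is a polynomial of degree $\leq 2$ in $z-\zeta$, holomorphic in $z$, with coefficients of class $C^{a+1,j}$ (resp.~$C_*^{b+1,j}$) in $(\zeta,t)$ inherited from $r^t$.  The estimate \re{elpp} will follow from the second-order Taylor expansion of the real function $r^t$ about $\zeta$,
\[
r^t(z)-r^t(\zeta) = -2\RE F^t(z,\zeta) + L^t(\zeta,z-\zeta) + E,
\]
where $L^t$ is the Levi form at $\zeta$ applied to $z-\zeta$, bounded below by $\la_0|z-\zeta|^2$ via \re{lamb0}, and the Taylor remainder $E$ is controlled by $(\la_0/C_n)|z-\zeta|^2$ for $|z-\zeta|<d_0$ via \re{pd2rho} (with $C_n$ large).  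Rearranging gives \re{elpp}; in particular $\RE F^t>0$, hence $F^t\neq 0$, on the subregion $\{r^t(z)\leq r^t(\zeta),\ z\neq\zeta,\ |z-\zeta|<d\}$, where the principal-branch logarithm of $F^t$ is well defined, holomorphic in $z$, and has the same regularity in $(\zeta,t)$ as $F^t$.

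Next, for the global construction, I would pick a smooth radial cutoff $\chi(w)$ with $\chi\equiv 1$ for $|w|\leq\e$ and $\chi\equiv 0$ for $|w|\geq 2\e$.  Since $F^t$ is holomorphic in $z$, the $(0,1)$-form
\[
\all^t(z,\zeta):=\db_z\bigl(\chi(z-\zeta)\log F^t(z,\zeta)\bigr)=\bigl(\db_z\chi(z-\zeta)\bigr)\log F^t(z,\zeta)
\]
is smooth in $(z,\zeta,t)$ on the region where $\log F^t$ is defined, supported in the annulus $\e\leq|z-\zeta|\leq 2\e$, and $\db_z$-closed for each $(\zeta,t)$.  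I would then apply \rt{spc} in the $z$-variable on a smoothly varying family of strictly pseudoconvex domains containing $\ov{D_{\e/2}^t}$, treating $(\zeta,t)$ jointly as a parameter (as permitted by the remark after \rp{nonparem}), to solve $\db_zu^t(\cdot,\zeta)=\all^t(\cdot,\zeta)$ and obtain $\{u^t\}$ of class $C^{a+1,j}$ (resp.~$C_*^{b+1,j}$) in $(z,\zeta,t)$; then define
\[
\Phi^t(z,\zeta):=\exp\bigl(\chi(z-\zeta)\log F^t(z,\zeta)-u^t(z,\zeta)\bigr),
\]
interpreted as $\exp(-u^t)$ outside $\supp\chi$.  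Since the exponent is $\db_z$-closed, $\Phi^t$ is holomorphic in $z$ and nonvanishing wherever the exponent is finite.  On $|z-\zeta|\leq\e$, where $\chi\equiv 1$, $\Phi^t=F^t\cdot e^{-u^t}$, giving factorization (ii) with $M^t:=e^{-u^t}\neq 0$; and the nonvanishing (i) on $\{r^t(z)\leq r^t(\zeta),\ z\neq\zeta\}$ holds because $F^t\neq 0$ there near the diagonal while $\Phi^t=\exp(-u^t)$ is nonvanishing away from it.  The regularity claim (iii) is immediate from that of $F^t$, $u^t$, and the exponential.

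The principal obstacle is twofold.  First, one must set up an auxiliary smooth family of strictly pseudoconvex domains in the $z$-variable, indexed by the joint parameter $(\zeta,t)$ in a neighborhood of each base point, with boundary regularity sufficient for \rt{spc} to yield $u^t$ of the stated class, and then patch via a partition of unity in $t$.  Second, the single-valuedness of $\log F^t$ on $\supp\chi$ is delicate: \re{elpp} gives $\RE F^t>0$ only on $\{r^t(z)\leq r^t(\zeta)\}$, while $\supp\chi$ includes points with $r^t(z)>r^t(\zeta)$ where $F^t$ may vanish along complex tangent directions.  This can be handled by inserting an additional smooth cutoff in $r^t(\zeta)-r^t(z)$ into $\chi$, restricting the above construction to the subregion where $\RE F^t>0$; the modification is harmless, because the required nonvanishing of $\Phi^t$ is only asked for on $\{r^t(z)\leq r^t(\zeta)\}$, and outside that $\Phi^t$ is permitted to vanish wherever $F^t$ does.
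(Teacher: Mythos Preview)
Your overall scheme is the right one, but two concrete choices differ from the paper's argument and in your version they create genuine gaps.

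\textbf{Scale of the cutoff.} You take $\chi\equiv1$ on $|w|\le\e$ and $\chi\equiv0$ on $|w|\ge2\e$; on $\supp\db_z\chi$ your Levi estimate then gives only
\[
\RE F^t\ \ge\ (r^t(\zeta)-r^t(z))+\tfrac{\la_0}{4}\e^2,
\]
and since $|r^t(\zeta)-r^t(z)|$ can be of order $\e$ while $\tfrac{\la_0}{4}\e^2\ll\e$, you cannot force $\RE F^t>0$ there. The paper instead cuts off at scale $d$ (with $\chi=1$ for $|w|<\tfrac{3d}{4}$ and $\chi=0$ for $|w|>\tfrac{7d}{8}$), so that on $\supp\db_z\chi$ the quadratic term contributes at least $\tfrac{\la_0}{4}\cdot(\tfrac{d}{2})^2=\tfrac{\la_0 d^2}{16}$, which dominates $|r^t(\zeta)-r^t(z)|\le 2\e=\tfrac{\la_0 d^2}{32}$. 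This is exactly why $\e$ is set equal to $\tfrac{\la_0 d^2}{64}$: the quantity $\e$ governs the band of $r^t$-values, not the spatial support of $\chi$. Your proposed rescue---an extra cutoff in $r^t(\zeta)-r^t(z)$---is problematic: once $\chi$ depends on $z$ through a non-holomorphic function, the identity $\exp(\chi\log F^t-u^t)=F^t\cdot e^{-u^t}$ fails on the transition region $\{\chi<1\}$ near the diagonal, and $(F^t)^\chi$ is not single-valued across zeros of $F^t$.

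\textbf{Regularity of $b^t_{jk}$.} Setting $b^t_{jk}=\pd^2 r^t/\pd\zeta_j\pd\zeta_k$ makes $F^t$ only $C^{a,j}$ in $(\zeta,t)$, so $(iii)$ would come out one order short. The paper mollifies the second derivatives of $r^t$ so that $b^t_{jk}\in C^{\infty,\infty}$, choosing the approximation close enough that \re{elpp} survives with constant $\la_0/4$; then the regularity of $F^t$ is dictated by $\pd r^t/\pd\zeta_j\in C^{a+1,j}$.

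\textbf{Solving $\db_z u^t$.} The paper does not invoke \rt{spc}. For each fixed $t_0$ it uses Sard's theorem to pick a regular value $\e'\in(4\e/5,9\e/10)$ so that $\pd D^{t_0}_{\e'}$ is smooth, then applies the \emph{fixed-domain} homotopy operator $T_{D^{t_0}_{\e'}}$ to $f^t(\cdot,\zeta)$ for all $t$ near $t_0$, obtaining the $(\zeta,t)$-regularity by differentiating under the integral; a partition of unity in $t$ globalizes. This avoids having to manufacture a smooth family of strictly pseudoconvex domains indexed by the joint parameter $(\zeta,t)$, which is the obstacle you flag but do not resolve.
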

 \begin{rem} $(i)$ The main conclusion is about the uniform size  $\e$, given in \re{dd0e},  of the band
  $D^t_{\e/2}\times (D^t_{\del_1}\setminus D^t_{-\e})  $  on which $\Phi^t(z,\zeta )$ are defined.
  This will be crucial in proving a parametrized version of Oka-Weil approximation in
  section~\ref{sect:ireg}. $(ii)$
 $\pd D^t$ might not be smooth and $D^t$
might not be connected. Furthermore, $D^t$ could be empty when $C^t$ consists of local minimum points
of $\var^t$.
  $(iii)$
  The results are classical for non-parameter case.
 When $t$ is fixed and $V_\del^t$ is replaced by a neighborhood of $\pd D^t$, and $ r ^t$
is of class $C^2$, see
\cite[p.~78, Theorem 2.4.3; p.~81, Theorem 2.5.5]{HL84}.  For the case when $\pd D^t$
has finitely smooth boundary, see Range~\cite[Proposition 3.1, p.~284]{Ra86}.

 \end{rem}
\begin{proof}
To simplify notation, we first derive some uniform estimates without parameter. We then make necessary adjustments  for the parametrized version.

 Let us first assume that $ r ^t$ is independent of $t$. Write $D^t,V_\del^t,D^t_{\del}$ as
$D,V_\del,D_\del$ respectively.
We consider
the Levi polynomial of $ r $ at $\zeta$
$$
F_0(z,\zeta):=-\sum\DD{ r }{\zeta_j}(z_j-\zeta_j)-\f{1}{2}\sum\DD{^2 r }{\zeta_j\pd \zeta_k}(z_j-\zeta_j)(z_k-\zeta_k).
$$
Assume  that
\eq{zezin} \zeta\in V_{\del_1}, 
\quad |\zeta-z|<d.
\eeq
We want to verify that the $z$ satisfies the condition in \re{pd2rho} without $t$. Otherwise, let $\tilde z$ be the point closest
to $\zeta$ in the line
segment $\ov{\zeta z}$ such that $\tilde z$ is not in $\om\cup D$. Then $\tilde z\in\pd\om\setminus D$, which
contradicts the definition of $d$ in \re{dd0e}.  Note that this also shows that any $\tilde z\in\ov{\zeta z}$ satisfies
\re{pd2rho} in which $z$ is replaced by $\tilde z$.
 Let $f(t)= r ((1-t)\zeta+tz)$.
Then $f(1)-f(0)=f'(0)+\f{1}{2}f''(0)+\f{1}{2}(f''(s)-f''(0))$ for some $0<s<1$.  By \re{pd2rho}-\re{dd0e}, we get
\aln
2\RE F_0(z,\zeta)&\geq  r (\zeta)- r (z)+\la_0|\zeta-z|^2\\
&\quad-C_n'\max_t|\pd^2 r ((1-t)\zeta+tz)-\pd^2 r (\zeta)||\zeta-z|^2.
\end{align*}
Therefore,   if $z,\zeta $ satisfy \re{zezin}, then
\eq{2ref0}
2\RE F_0(z,\zeta)\geq  r (\zeta)- r (z)+\f{\la_0}{2}|\zeta-z|^2.
\eeq
Using a real smooth  function   $\chi\geq0$ with compact support in the unit ball of $\cc^n$ so that $\int\chi=1$,  let us define
$$
\chi_d(z)=d^{-2n}\chi(d^{-1}z), \quad
a_{ij}(z)=\int \DD{^2 r }{\zeta_i\pd\zeta_j}(z-\zeta)\chi_d(\zeta)\, dV(\zeta).
$$ Then
we get   $C^{\infty}$ functions $a_{ij}$ in  $V_{\del_1}=D_{\del_1}\setminus \ov D_{-\del_1}$ such that for $\zeta\in V_{\del_1}$,
\gan
\sup_{\zeta\in\om} \left|a_{ij}(\zeta)-\DD{^2 r }{\zeta_i\pd\zeta_j}\right|<   C_n'\f{\la_0}{C_n},\\
 |a_{ij}|_{a}\leq C''_a| r |_{a+2},\quad
  |a_{ij}|_{a+1}\leq C''_ad^{-1}| r |_{a+2}.
 \end{gather*}
We replace the Levi polynomial $F_0$ by
 $$
F(z,\zeta):=-\sum\DD{ r }{\zeta_j}(z_j-\zeta_j)-\f{1}{2}\sum a_{ij}(\zeta)(z_i-\zeta_i)( z_j-\zeta_j).
$$
Now \re{2ref0} implies that
\eq{2ref}
2\RE F(z,\zeta)\geq r (\zeta)- r (z)+ \f{\la_0}{4}|\zeta-z|^2
\eeq
if $\zeta$ and  $z$ satisfy \re{zezin}. This shows that if $\zeta, z$ satisfy
\ga\label{rozz-}
d/2<|\zeta-z|<d, \quad z,\zeta \in D_{\del_1},\\
\label{rozz}
  r (\zeta)> r (z)-\f{\la_0d^2}{32},
\end{gather}
then by \re{2ref}-\re{rozz},  we must have
\eq{rozz+}
2\RE F(z,\zeta)\geq \f{\la_0}{4}\times\f{d^2}{4}-\f{\la_0d^2}{32}=\f{\la_0d^2}{32}.
\eeq
Let $\chi$ be a $C^\infty$ function such that $\chi(\zeta)=1$ for $|\zeta|<\frac{3d}{4}$ and $\chi(\zeta)=0$ for
 $|\zeta|>\f{7d}{8}$. Define
\eq{fzztwo}
f(z,\zeta)=\begin{cases} \db_z(\chi (\zeta-z)\log F(z,\zeta))
      & \text{if $d/2<|\zeta-z|<d$}, \\
   0   & \text{otherwise}.
\end{cases}
\eeq
It is clear that the coefficients of $f$ are $C^{a+1}$ in $z,\zeta $ if
$$
z\in D_{\e}, \quad \zeta\in D_{\del_1}\setminus D_{-\e}, \quad \e=\min\left\{\del_1,\f{\la_0d^2}{64}\right\}.
$$
Indeed, the latter conditions and  \re{rtl0} imply $r(\zeta)-r(z)\geq -2\e\geq -\f{\la_2 d^2}{32}$, which gives us \re{rozz}. Thus if \re{rozz-} holds, then \re{rozz+} implies that $f$
is defined in the first case of \re{fzztwo}. So it is of class $C^{a+1}$.  If \re{rozz-} fails, then $f$ is defined
in the second case. Furthermore, near $|\zeta-z|=d/2$ or $d$,
 \re{rozz} and \re{rozz+} imply that $f$ is identically zero.
 Therefore, $f$ is of class $C^{a+1}$.

 We claim that
 $ r \geq\del_0$ on $\pd\om\setminus D$ and it  takes all values $[0,\del_0)$ in $\om$. Indeed, by \re{ctn0}, $\om$ is not contained in $D$. Then $m:=\max\{r(z)\colon, z\in\pd\om\}\geq\del_0$ by \re{rtl0}. For any $c\in[0,\del_0)$, we find
a line segment $\gaa$  with $\gaa(0)\in C(t)$, $m':=r(\gaa(1))\in[c,m)$,
and $\gaa(1)\in\om$. Let $s'$ be the largest $s$ such that $r(\gaa(s))=0$, and let $s''$ be the smallest $s\in[s_*,1]$ such that $r(\gaa(s))=m'$.  Thus $\gaa((s',s''))\subset\om\setminus\ov D$ and
$r(\gaa((s',s'')))=[0,m']$; the claim is verified.
By Sard's theorem,  the $ r $
attains  a regular value   $\e'\in[4\e/5,9\e/{10}]$   on $\om$. Recall that
$
D_{\e'}=D\cup \{z\in\om\colon r (z)<\e'\}.
$
By \re{rtl0}, we obtain
$$
\pd D_{\e'}=\om\cap \{ r ={\e'}\}
$$
which is compact and of class $C^{a+2}$.

  Let $T_{D_{\e'}}$ be a  solution operator from the classical homotopy formula
for the strictly pseudoconvex domain $D_{\e'}$.
 (Note that the construction does not require the domain to be connected,
 as shown in \rt{phtf} below for the parametrized version.)

Define
$$
u(z,\zeta)=T_{D_{\e'}}f(\cdot,\zeta)(z), \quad\forall \zeta\in D_{\del_1}\setminus D_{-\e},
z\in D_{\e'}.
$$
By the interior estimate, we obtain   $u\in C^{a+1}(D_{\e'})$ as $f\in C^{a+1}(D_{\e'})$.
 We also have
$$
\DD{^{|\beta|} u(z,\zeta)}{\xi^\beta}=T_{D_{\e'}}\left\{\DD{^{|\beta|} f(\cdot,\zeta)}{\xi^\beta}\right\} (z),
$$
which is continuous on $D_{e'}\times (D_{\del_1} \setminus D_{-\e})$ for  $|\all|+|\beta|\leq a+1$.
Therefore,  
$$
T_{D_{\e'}}f(\cdot,\zeta)(z)\in C^{a+1}(D_{\e'}\times (D_{\del_1}\setminus D_{-\e})).
$$

By \re{2ref}, we can define $\log F(z,\zeta)$ for $0<|\zeta-z|<\e$ to define
$$
\Phi(z,\zeta)=\begin{cases}
 F(z,\zeta)e^{-u(z,\zeta)}    & \text{if $|\zeta-z|<d$}, \\
e^{\chi(\zeta-z)\log F(z,\zeta)-u(z,\zeta)}    & \text{otherwise}.
\end{cases}
$$
 This shows that $\Phi\in C^{a+1}(D_{\e'}\times (D_{\del_1} \setminus D_{-\e}))$.

We now consider the parametrized version.    Let us first show that $d$, defined by \re{dd0e}, is positive.
(Note that we cannot conclude that $\pd\om^t\setminus D^t$, $\pd V_{\delta_0}^t$ are upper-semi continuous.)
Otherwise, there are sequences $z_k\in V_{\del_1}^{t_k}, z_k'\in\pd\om^{t_k}\setminus D^{t_k}$ with $t_k\to t_0$
as $k\to\infty$ such that $|z_k'-z_k|\to0$ as $k\to\infty$. By \re{ctn0}, we know that $\cL D$ is relatively compact in $\cL U$ as $\om$ is relatively compact in $\cL U$. Taking a subsequence,  we may assume that $z_k\to z_0$ and $z'_k\to z_0$ as $k\to\infty$. We have $\pd\om^t\setminus D^t\subset U^t\setminus(D^t\cup\om^t)$; hence \re{rtl0}
implies that $r(z_k')>\del_0$.
We have $z_k\in V_{\del_0}^{t_k}$; hence $|r^{t_k}(z_k)|\leq\del_1$. Letting $k\to\infty$, we obtain that $r^{t_0}(z_0)\geq\del_0$ and $r^{t_0}(z_0)\leq\del_1$, a contradiction.

We first remark that  $ r ^t$ is defined on $U^t$.
For   $D^t$ defined by $ r ^t<0$ (possibly empty),  we take
$$
F^t(z ,\zeta ):=-\sum\DD{ r ^t}{\zeta _j}(z _j-\zeta _j)-\f{1}{2}\sum b^t_{ij}(\zeta )(z _i-\zeta _i)( z _j-\zeta _j).
$$
Here by \rp{intapp}, we have chosen $\{b_{\all\beta}^t\}\in C^{\infty,\infty}$
such that $|b^t_{\all\beta}(\zeta)-\f{\pd^2 r ^t}{\pd\zeta_\all\pd\beta_\beta}|
<\la_0/4$.
We have \re{elpp}.
We then define
\gan
f^t(z ,\zeta )=\begin{cases} \db_{z }(\chi (\zeta -z )\log F^t(z ,\zeta ))
      & \text{if $d/2<|\zeta -z |<d$}, \\
   0   & \text{otherwise}.
\end{cases}
\end{gather*}
As before, we can verify that $\{f^t\}$ is of class $C^{a+1,j}\bigl(\bigl\{D^t_{\e}\times
( D^t_{\del_1}\setminus \ov{D^t_{-\e}})\bigr\}\bigr)$.

Fix $t_0$. We apply the above to $D=D^{t_0}$ and denote $D_{\e'}$ by $D_{\e'}^{t_0}$, where $\e'$ is a regular value of $r^{t_0}$ and $\e/2<\e'<\e$.
Applying \rl{upss} to $K^t=\ov D^t\cup\{z\in\om^t\colon r^t\leq \e/2\}$, 
we conclude
that
\eq{DtDe}
D^t_{\e/2}\subset D_{\e'}^{t_0}\subset D_\e^t
\eeq
when $t$ is close to $t_0$. Here the second inclusion is verified directly.
Assume that $t$ is sufficiently close to $t_0$. Then we obtain
\begin{gather}
u^t(z ,\zeta )=T_{D^{t_0}_{\e'}}f^t(\cdot,\zeta )(z ),\quad \zeta \in D^t_{\del_0}\setminus D^t_{-\e},
\quad z \in D^t_{\e/2},\label{utzt}
\\
\Phi^t(z ,\zeta )=\begin{cases}
 F^t(z ,\zeta )e^{-u^t(z ,\zeta )}    & \text{if $|\zeta -z |<d$}, \\
e^{\chi(\zeta -z )\log F^t(z ,\zeta )-u^t(z ,\zeta )}    & \text{otherwise}.
\end{cases}\nonumber
\end{gather}
Since $\{ r ^t\}\in C^{a+2,j}(\{\ov{U^t}\})$, then $\{\pd_{\zeta }
 r ^t\}\in C^{a+1,j}(\{\ov{U^t}\})$.

We now use a partition $\{\chi_\nu\}$ of unity on $[0,1]$ such that each $\chi_\nu$  has
 compact support in a small neighborhood $\cL U(t_\nu)$
of $t_\nu$ and we define \re{utzt}.  We then define
\gan
\tilde u^t(z ,\zeta )=\sum\chi_\nu(t)T_{D_{\e'}^{t_\nu}}f^t(\cdot,\zeta )(z ),\\
\tilde\Phi^t(z ,\zeta )=\begin{cases}
 F^t(z ,\zeta )e^{-\tilde u^t(z ,\zeta )}    & \text{if $|\zeta -z |<d$}, \\
e^{ \chi(\zeta -z )\log F^t(z ,\zeta )-\tilde u^t(z ,\zeta )}    & \text{otherwise}.
\end{cases}
\end{gather*}
For $z \in\cL U(\ov {D^t})$, we have $\db\tilde u^t=\sum_\nu\chi_{\nu}(t)f^t(z ,\zeta )=
f^t(z ,\zeta )$ for $\db$ in $\ov{z }$.
Define $\Phi^t$, $M^t$ in the theorem to be  $\tilde\Phi^t$, $e^{-\tilde u^t}$ respectively.
Note that the integral operator $T_{D_{\e'}^{t_\nu}}$ is independent of $t$,   we can verify   $(iii)$.
 The proof for $C_*^{b+1,j}$ regularity is almost identical as the solution  operator  in \re{utzt} is independent of $t$. Thus we do not repeat the argument here.  \end{proof}

We need Oka-Hefer decomposition with multi parameters.  Let us first introduce the following.

\begin{defn}\label{oh5} Let $a,b\in\ov\rr_+$ and let $j\in\ov\nn$ with $j\leq a$.
Let   $ \{w^t\},  \{Y^t\}$ be continuous families of domains in $\cc^n$, $\rr^d$, respectively.
Denote by
 $
 C^{a,j}(\{\cL O(\om^t),Y^t\})
$
 the subspace of   $\{f^t\}\in C^{a,j}(\{\om^t\times Y^t \})$ satisfying  $f^t(\cdot, y)\in\cL O(\om^t)$ for all $y\in Y^t$.
We  define $C_*^{b,j}(\{\cL O(\om^t),Y^t \})$ analogously. \end{defn}

%

\le{pext}  Let $a\in \ov\rr_+$ and let $j\in\ov\nn$ with $j\leq a$. 
Let $\{D^t\}$ 
be a continuous family of  domains in $\cc^n$ with  total space $\cL D$.
 Let $\{\var^t\}\in C^{0,0}(\cL D)$ be a family of plurisubharmonic functions $\var^t$ on $D^t$.  Let
$$
\om^t_c=\{z\in D^t\colon\var^t(z)<c\}
$$
with total space $\om_c$.
Let $0<c_1<c_0$. Assume that
 $\om_{c_0}$ is relatively compact in $\cL D$.
    Let $H$ be a complex hyperplane in $\cc^n$. Assume     that
$
H\cap{\om_{c_1}^t}\neq\emptyset$, for all $ t\in[0,1].
$
There exits a linear continuous extension mapping
$$E\colon
C^{a,j}(\{\cL O(H\cap\om^t_{c_0}),Y^t\})\to C^{a,j}(\{\cL O(\om^t_{c_1}),Y^t\})$$
 such that  $(Ef)^t=f^{t}$ on ${(\om_{c_1}^t}\cap H)\times Y^t$.
 The similar conclusion holds when  $C^{a,j}$
  in hypotheses and the conclusion
   are replaced by $C^{b,j}_*$. 
\ele
\begin{proof} We may assume that $H$ is defined by $z_n=0$. Let $z=(z',z_n)$.
 Fix $t$. Since
 $\om^t_{c_0}$ is relatively compact in $D^t$, then
 $\om^t_{c_1}$ is relatively compact in $\om^t_{c_2}$ for $c_1<c_2\leq c_0$. Also, $\om^t_{c}$
 is a pseudoconvex domain for each $c<c_0$, if it is non-empty.

Fix $c_1<c_2<c_3<c_4<c_5<c_0$. Fix $t_0$.  When  $t$ is close to $t_0$, we have as \re{DtDe}
$$
\om_{c_1}^{t}\subset\subset \om_*\subset\subset \om_{c_2}^{t_0}  \subset \subset  \om_{c_3}^{t_0}\subset\subset \om_{c_4}^{t_0}\subset\subset \om_{c_5}^{t_0}\subset\subset  \om_{c_0}^{t}.
$$
Here $\om_*$ is   strictly pseudoconvex domain with  $C^2$ boundary  and it depends only
on $t_0$.

Note that $(\ov{\om_{c_4}^{t_0}}\cap H)\times\ov\Delta_{\e}\subset \om_{c_5}^{t_0}$ for some
$\e>0$. Let $\chi(z)$
be a smooth function that is equal to $1$ on $(\ov{\om_{c_3}^{t_0}}\cap H)\times\ov\Delta_{\e/2}$ and has
compact support in $({\om_{c_4}^{t_0}}\cap H)\times\Delta_{\e}$.  Consider
 $$
 g^{t}(z,y)= \begin{cases}
f^{t}(z',y) \db_z(\f{1}{z_n}\chi(z)), &\  z\in   ({\om_{c_4}^{t_0}}\cap H)\times\Delta_{\e}\setminus H\times\{0\},\\
0, &\  z\in   ({\om_{c_3}^{t_0}}\cap H)\times\Delta_{\e/2}\cup
\{\cc^n\setminus   ({\om_{c_4}^{t_0}}\cap H)\times\Delta_{\e}\}.
 \end{cases}
 $$
 When $\e$ is sufficiently small, the union of two sets in the above two formulae contains
 $ \ov{\om_{c_2}^{t_0}}$.
Thus we see that $\{g^{t}\}\in C^{a,j}(\{\om_{c_2}^t\times Y^t\})$.

 We now use the linear $\db$-solution operator $T_{\om_*}$ and define $u^{t}(z,y)=T_{\om_*}g^{t}(z,y)$. Then
  $\tilde f^{t}(z,y):=\chi(z)f^{t}(z',y)-z_nu^{t}(z,y)$ are holomorphic extensions respectively on
  $\om_{c_1}^t$  for $t$ close to $t_0$.  Clearly, for $t$ near $t_0$, we obtain that $\{u^{t}\}$
  and hence $\{\tilde f^{t}\}$ are in $C^{a,j}(\{\om_{c_1}^t\times Y^t\})$.  Furthermore, $\{\tilde f^{t}\}\in
  C^{a,j}(\{\cL O( \om_{c_1}^t),Y^t\})$.
  Using
 a partition of unity $\{\psi_\nu \}$ for $[0,1]$, we   get a desired holomorphic extension
 $$
 \tilde f^{t}(z,y)=\sum \psi_\nu(t)(\chi_\nu(z)f^{t}(z',y)-z_nu_\nu^{t}(z,y)).
 $$
 That is that, on $\om_{c_1}^t\cap H$, we have $\tilde f^{t}(z,y)=
 f^{t}(z',y).$
\end{proof}

Let $H$ be a complex subspace of $\cc^n$.
If $\{\om^t\}$ is a continuous family of domains $\om^t$ in $\cc^n$, let
$C_H^{b,i}(\{\cL O(\om_{c_0}^t),Y^t\})$ denote the space of $\{f^t\}\in
C^{b,i}(\{\cL O(\om_{c_0}^t),Y^t\})$ such that $f^t|_H=0$.
\le{} Let $D^t, \var^t,\om^t,\om^t_{c_0}$ be as in \rla{pext}.
Let $H$ be defined by $z_1=\cdots=z_\ell=0$. Let $0<c_1<c_0$.
Assume that
$
H\cap{\om_{c_1}^t}\neq\emptyset$, for all $t\in[0,1].
$
Suppose that $a'\leq a$, $i\leq j$, and $i\leq a$.
There exist linear maps
$$T_m\colon C_H^{a',i}(\{\cL O(\om_{c_0}^t),Y^t\})\to C^{a',i}(\{\cL O(\om_{c_1}^t),Y^t\})$$
 such that
$
f^{t}(z,y)=\sum_{m=1}^\ell T_m^{t}f(z,y)z_m.
$
The similar conclusion holds when $C^{a, j}$ is replaced $C_*^{b, j}$ for any $b\geq0$.
\ele
\begin{proof} Apply induction on $\ell$. For $\ell=1$, take $f_1^{t}(z,y)=f^{t}(z,y)/{z_1}$.  Away from $z_1=0$, $f_1$ clearly has
the desired smoothness.  To check the smoothness near $z_1=0$, we note that
$\ov{\Delta_\delta}\times
(\ov{\om_{c_1}^t}\cap H)\subset \om_{c_0}^{t_0}
$
if $\del$ and $|t-t_0|$ are sufficiently small.  For $z$ near $(0,p_2,\ldots, p_n)\in \ov{\om_{c_0}^t}\cap H$,
we have
$$
f_1^{t}(z,y)=\f{1}{(2\pi i)^n}\int_{|\zeta_1|=\del, |\zeta_2-p_2|=\del,\dots,
|\zeta_n-p_n|=\del}\frac{f^{t}(\zeta, y )}{\zeta_1\prod(\zeta_i-z_i)}\, d\zeta_1\cdots d\zeta_n.
$$
It is clear  that
$\{f_1^{t}\}\in C^{a,i}(\{\cL O(\om_{c_1}^t),Y^t\})$.

Assume that the lemma holds for $\ell-1$.  Then
$
f^{t}(0,z',y) =\sum_{j=2}^\ell g^{t}_j(z',y)z_j.
$
By \rl{pext}, we extend $\{g^{t}\}$ to $\{\tilde g^{t}\}\in C^{a,i}(\{\cL O(\om_{c_2}^t),Y^t\})$ such that  $\{\tilde g^{t}\}$ is
 still of class $C^{a,j}$.  Here $c_1<c_2<c_0$.
Define
$
\tilde f^{t}(y,z)=f^{t}(y,z)-\sum_{j=2}^\ell\tilde g_j^{t}(y,z)z_j.
$
Then $\tilde f^{t}(y,z)=0$ when $z_1=0$. So $\tilde f^{t}(y,z)=z_1\tilde g_1^{t}(y,z)$  and
$ \{ \tilde g_1^{t}\} \in C^{a,i}$.  %
%
%
%
\end{proof}

\begin{thm}[Hefer's theorem with multi-parameters]
\label{htmp}
Let $ r ^t, D^t,U^t$ be as in \rta{hrfn}.
There exists $c_0>0$ such that for $0<c_1<c_0$ there is a continuous linear map
$$
W\colon C^{a+1,j}\left(\{\cL O( D^t_{c_0}),Y^t\}\right)
\to C^{a+1,j}\left(\{\cL O( D^t_{c_1}\times D^t_{c_1}),Y^t \}\right)
$$
so that $w=Wf$ satisfies
$f^{t}(z,y)-
f^{t}(\zeta,y)=\sum_{i=1}^n w_i^{t}(z,\zeta ,y)(\zeta_i-z_i).
$
\eth
%
\begin{proof} On $D^t\times D^t\times Y^t$ we consider $F^{t}(z,\zeta,y )=f^{t}(z,y)-f^{t}(\zeta,y)$.
Now $(D_c^t)^2\subset(D^t\cup\om^t)^2$ is defined $\phi^t(z,w)<c$, where $\phi^t$ is the plurisubharmonic function in $(D^t\cup\om^t)^2$ defined by
$$
\phi^t(z,w):=\begin{cases}
\max\{-\del_*/2,r^t(z),r^t(w)\},& (z,w)\in (D^t\cup\om^t)^2\setminus ( D_{-\del_*})^2, \\
-\del_*/2, &(z,w)\in (D^t_{-\del_*})^2.
\end{cases}
$$
Here $\del_*$ is sufficiently small and $0<\del_*<\min(\del_1,\e)$ for $\del_1$ and $\e$ in \rt{hrfn}.
Apply the linear change of coordinates
\eq{Lpres}
L\colon \tilde z=z, \quad
\tilde \zeta=\zeta-z.
\eeq
Set $\tilde D^t=L(D^t\times D^t)$,   $\tilde f^{t}(\tilde z,\tilde\zeta,y)=f^{t}(\tilde z,\tilde\zeta+\tilde z,y))$, and
$\tilde\var^t(\tilde z,\tilde\zeta)=\phi^t(\tilde z,\tilde\zeta+\tilde z)$.  Now $L$, defined by \re{Lpres}
  identify  $C^{a+1,j}(\{ \cL O(D^t\times D^t),Y^t\})$ with $C^{a+1,j}(\{ \cL O(\tilde D^t\times D^t),Y^t\})$.
Replacing $D^t, \var^t,f^t$ by $\tilde D^t, \tilde\var^t$, $\tilde f^t$ respectively
in the last two lemmas, we obtain the conclusion.
 \end{proof}

\begin{thm}[Decomposition of Henkin-Ram{\'{\i}}rez functions] \label{rhfn}
Under the hypotheses of \rta{hrfn}, we have additionally
  $
\Phi^t(z,\zeta )= (\zeta -z )\cdot w^t(z,\zeta )
$
with  $$\{w_i^t\}\in C^{a+1,j}\left(\{\cL O(D^t_{\e/4}),D^t_{\del_1}\setminus D^t_{-\e}\}\right).$$
Analogously,  if
$\{ r ^t\}\in C_*^{b+2,j}(\{U^t\})$, then $\{w_i^t\}\in C_*^{b+1,j}(\cL O(D^t_{\e/4}),\{D^t_{\del_1}\setminus D^t_{-\e}\})$.
 \end{thm}
\begin{proof}Consider the family of holomorphic functions
$$
\{\Phi^t\}\in C^{a+1,j}\left(\{\cL O(D^t_{\e/2}),D^t_{\del_1}\setminus D^t_{-\e}\}\right).
$$
By \rt{htmp}, we have
$
\Phi^t(z,\zeta )-\Phi^t(\tilde z, \zeta)=\tilde w^{t}(z,\tilde z,\zeta)\cdot(\tilde z-z)
$
for $\zeta\in D^t_{\del_1}\setminus D^t_{-\e}$ and $\tilde z,z\in D^t_{\e/2}$.
Then $\tilde z=\zeta$ and $w^t(z,\zeta)=\tilde w^{t}(z,\zeta,\zeta)$ are the desired functions.
\end{proof}

The above theorem does not require that $\nabla r^t \neq0$ on $\pd D^t$, that is, $\pd D^t$
might not be smooth.  When in additionally $\pd D^t$ is smooth, we can use the Leray map $w(z,\zeta )$ to formulate homotopy formulae.  We take the Leray maps
$$
g^1(z,\zeta )=w^t(z,\zeta ), \quad g^0(z,\zeta )=\ov\zeta-\ov z.
$$
 We construct $\Om^0$, $\Om^{01}$ via $g^0$,  and
$g^0,g^{1}$ respectively.  The following theorem is a direct consequence of the classical homotopy formula
and the solution operator of Lieb-Range.
\th{phtf}
Let $a, b\in\ov\rr_+$ and let $j\in\ov\nn$
with $j\leq a$.  Let $\{U^t\}$ be a continuous family of domains
with a bounded total space $\cL U$, and let $\{\om^t\}$ be a continuous family of domains with $\om^t\subset U^t$.
 Let $\{ r ^t\}$ be of class $C^{a+2,j}(\cL U)$ $($reps. $C^{b+2,j}_*(\cL U))$.   Suppose that $ r ^t$
is strictly plurisubharmonic on  $\om^t$.
 Let $D^t$ be  relatively compact open sets in $U^t$.  Suppose that $\pd D^t=\{z\in\om^t\colon  r ^t(z)=0\}$, $ r ^t>0$ on $U^t\setminus\ov {D^t}$,
 and $\pd r ^t\neq0$ on $\pd D^t$.  Let $f^t$ be a $(0,q)$ form on $D^t$ of class $C^{1,0}(\cL D)\cap C^{0,0}(\ov{\cL D})$.
  Then
  \gan
 f^t=\db T_{q-1}^tf+T_{q}^t\db f,\quad 1\leq q\leq n,\\
T_{q-1}^tf=-\int_{\pd D^t}\Om_{0,q-1}^{0 {1}}\wedge f^t+\int_{D^t}\Om_{0,q-1}^{ 0}\wedge f^t.
\end{gather*}
Here $g^{1}=\Phi^t=w^t\cdot(\zeta-z)$ is given by \rta{rhfn} and $g^0=\ov\zeta-\ov z$.
Assume further that  $\db f^t=0$ on $D^t$. Then $\db S_q^tf=f$ for
\gan 
S^{t}_qf=L^t_qEf+K^t_q\db Ef, \quad 1\leq q\leq n,\\
L^t_qEf= \int_{\cL U^t }\Om^0_{0,q-1}\wedge E^tf,  \quad
\quad K^t_q\db Ef=
 \int_{\cL U^t\setminus D^t}\Om_{0,q-1}^{01}\wedge \db_\zeta E^tf.
 \nonumber
\end{gather*}
Here $\{E^tf\}$ is a Seeley extension such that for each $t$, $E^tf$ has compact support in $\cL U(\ov{D^t})$.
 \end{thm}

The proof for strictly convex case of \rp{estK} can be modified easily to obtain the following.
\th{5point9} Let $1\leq q\leq n$. Let $j,k\in\ov\nn$ with $j< k$.
  Let $\{D^t\}$, with total space $\cL D$,  be a smooth family of  strictly pseudoconvex domains $D^t$ with $C^{k+1,j}$ boundary. Then the $\db$-solution operators $S_{q}$ for $(0,q)$-forms on $D^t$ in \rta{phtf} satisfies
$$
\|S_qf\|_{\cL D; k+1/2,j}\leq C_{k}(\|r\|_{k+1,j})\|f\|_{\cL D;k,j}.
$$
Here the constant $C_k(\|r\|_{k+1},j)$ depends on norm of $\|r\|_{k+1,j}$.
\eth
Note that Theorems~\ref{phtf} and \ref{5point9} yield another proof the first part of \rt{dbartb}, which is  proved in \rt{spc}.

\setcounter{thm}{0}\setcounter{equation}{0}
\section{The $\db$-equation for variable domains of holomorphy}
\label{sect:ireg}

In this section, we present a parametrized version of Oka-Weil approximation in \rp{okwe}.  We then obtain interior regularity for a continuous family of domains of holomorphy that admits a continuous family of plurisubharmonic uniform exhaustion functions in \rt{intr}. As applications we  then solve a parametrized version of Levi-problem in \rt{levip}
and a parametrized version of Cousin problems  in \rt{cousin}. The formulation and the solutions of the Cousin problems   lead us to study functions defined on general open sets of $\cc^n\times[0,1]$ that are total spaces of
$\{D^t\}$ with $D^t$ being possibly empty.

It is a standard fact that for a domain $\Om$ in $\cc^n$ which is not the whole space,   $D$ is pseudoconvex if and only if $-\log\dist(z,\pd \Om)$ is a \psh\ function on $\Om$.
Therefore, we have the following.
\le{var0}
 Let
 $\Gaa^t\colon \ov{D}\to \ov{D^t}$ be a homeomorphism with $\Gaa^t(D)=D^t$ for each $t\in[0,1]$, where  $D,D^t$  are bounded domains in $\cc^n$. If $\Gaa\in C^{0,0}(\ov D)$ and  $D^t$ are domains of holomorphy, then
   $\{-\log\dist(z, 
\pd D^t)\}\in C^{0,0}(\cL D)$
 is a family of \psh\  uniform exhaustion functions on $D^t$.
\ele
Let  $\{D^t\}$ be a continuous family of domains in $\cc^n$, i.e. $\cL D=\cup D^t\times\{t\}$ is open
in $\cc^n\times[0,1]$.
If $K$ is a subset of $\cL D$, we define
$$
K^t=\{z\colon (z,t)\in K\}.
$$
By \rl{upss}, $K$ is compact,  if and only if $K^t$ are compact and
$\{K^t\}$ is {upper semi-continuous}.

Let $P^0(D)$ denote the set of continuous \psh\ functions on $D\subset\cc^n$, and let $P(D)$ denote the set of \psh\ functions on $D$.
For a subset $K$ of $D\subset\cc^n$, the $P(D)$  hull of $K$, denoted by $\widehat K_D^P$, is the set of $z\in D$ satisfying
$$
 \var(z)\leq\sup\{\var(w)\colon w\in E\}, \quad \forall \var\in P^0(D).
$$

\pr{smplur} Let  $\{D^t\}$ be a continuous family of non-empty domains in $\cc^n$ with total space $\cL D$.
Let  $\{K^t\}$ be a family of $($possibly empty$)$ subsets $K^t$ of $D^t$ with a compact total space
$\cL K$.
Let $\var^t$ be plurisubharmonic functions on $D^t$  with  $\{\var^t\}\in C^{0,0}(\cL D)$.
Let $\e>0$. There exists  a family  $\{\tilde\var^t\}\in C^{\infty,\infty}(\cL D)$ of functions such that $\tilde\var^t$  are plurisubharmonic on $\om^t$, the total space $\om$ of $\{\om^t\}$ is
an open neighborhood of $\cL K$,  and
$$
0\leq\tilde\var^t(z)-\var^t(z)<\e, \quad\forall z\in K^t.
$$
Here the last requirement is vacuous if $K^t$ is empty.
\epr
\begin{proof} To apply smoothing for the variable domains,  we first extend $\{D^t\}$ to a larger
family of domains as follows. We set $ D^t=D^0, K^t=K^0$ for $t<0$ and $ D^t=D^1, K^t=K^1$ for $t>1$.
We define  by
$$
\var^t_0(z):=\var^{0}(z), \quad t<0;  \quad \var^t_0(z):=\var^1_0(z), \quad t>1.
$$
Then  $\var^t_0$ is   plurisubharmonic on $D^t$.
Let $\chi(t)$ be a non-negative smooth function with compact support such that $\chi(t)=1$ near $0$ and $\int\chi(t)\, dt=1$. Let $\tilde\chi(z)=c_n\chi(|z|)$ such that $\int\tilde\chi(z)\, dV(z)=1$. Set
$
\chi_\del(t)=\del^{-1}\chi(\del^{-1}t)$ and $ \tilde\chi_\del(z)=\del^{-2n}\tilde\chi(\del^{-1} z).
$
 Consider
$$
\var_\del^t(z)=\iint\var_0^{s}(\zeta)\chi_{\del}(t-s)\tilde\chi_{\del}(z-\zeta)\, ds\, dV(\zeta).
$$
   Note that $\cL K$ is compact.
The sub-mean value property holds for $\var_\del^t$ near $K^t$ for all $t$ when $\del$
is sufficiently small. Therefore, each $\var_\del^t$ is
 plurisubharmonic near  $K^t$ for $0\leq t\leq1$. Using a partition of unity and further smoothing, we can achieve $\{\var^t\}\in C^{\infty,\infty}(\cL D)$.
\end{proof}

\pr{vart} Let $\{D^t\}$ with total space $\cL D$ be as in \rpa{smplur}.
 Suppose that there exist  \psh\    uniform exhaustion functions $\var_0^t$ on  $D^t$ with $\{\var_0^t\}\in C^{0,0}(\cL D)$.
 Let $\{K^t\}$ be a family of $($possibly empty$)$ sets with total space $\cL K$ being a
 compact subset of $\cL D$ and let $\om$ be an open neighborhood of $\cL K$ in $\cL D$. Assume that each  $K^t$ is ${P(D^t)}$ convex.
 There exist   \sps\    uniform exhaustion  functions $\var^t$ on $D^t$
 satisfying the following:
 \bppp
 \item $\{\var^t \}\in C^{\infty,\infty}(\cL D)$.
 \item $\var^t<0$ on $K^t$ and $\var^t>1$ on $D^t\setminus \om^t$.
 \eppp
\epr
\begin{proof} We first prove it when $K^t$ and $\om^t$ are empty for all $t$.
Since $\{(z,t)\colon\var_0^t(z)\leq0\}$ is compact, adding a constant we may assume that $\var_0^t\geq 2$ on $D^t$.
Consider sub-level sets
$$
 {E^t_m}   
=\{z\in D^t: \var_0^t(z)\leq m\}.
$$
   By assumption, the total space $\cL E_m$ of the family is compact.
By \rp{smplur}, we  find $\{\var_1^t\}\in C^{\infty,\infty}(\cL D)$
such that for each $t$, $0\leq\var_1^t-\varphi_0^t<1/4$ on  $E_2^t$
and $\var_1^t$ is  plurisubharmonic on $E_2^t$.
 Set $\hat\var_1^t=\var_1^t$.
Let $\chi_2 $ be a $C^\infty$ convex function such that $\chi_2(s)=0$ for $s<5/4$ and $\chi_2''(s)>0$ for $s>5/4$.
Take $\{\phi_2^t\}\in C^{\infty,\infty}$ such that $\phi_2^t$ is plurisubharmonic
 on $E_3^t$ and $0\leq\phi_2^t-\var_0^t<1/4$ on $E_3^t$.
Let $\hat\var_2^t=
\hat\var_1^t+C_2\chi_2 \circ\phi_2^t$.
 When $C_2$ is sufficiently large, $\hat\var_2^t$ is \sps\ on $E_3^t$ and
 $\hat\var_2^t>1$ on $E_3^t\setminus E_2^t$.
Inductively, we find   plurisubharmonic $\phi_m^t$ such that $| \phi_{m}^t -
\var_0^t|<1/4$ on $E_{m+1}^t$ with $\{\phi_m^t\}\in C^{\infty,\infty}$. Take a smooth convex function $\chi_m$ such that
$\chi_m(s)=0$ for $s<m+1/4$ and $\chi_m''(s)>0$ for $s>m+1/4$.
Choose $C_m$ sufficiently large such that
$\hat\var_{m}^t=
 \hat\var_{m-1}^t+C_m\chi_m\circ\phi_{m}^t$  is \psh\ on $E^t_{m+1}$ and
 $\hat\var_{m}^t>m$ on $E_{m+1}^t\setminus E_{m}^t$.
   Then $\hat\var^t=\lim_{m\to\infty}\hat\var^t_m$ satisfies
all the conditions.

We now deal with the general case.        Fix $t_0$. We first consider the case where   $K^{t_0}$ is empty.  By the upper semi-continuity of $\{K^t\}$, we know that $K^t$ is empty when $t$ is close to $t_0$.  So the above argument shows there are plurisubharmonic functions $\var^t$ defined for $t$
in a neighborhood of $t_0$ such that $\var^t>1$ on $D^t\setminus \om^t$ while the family
$\{\var^t\}$ has class $C^{\infty,\infty}$.
We  now consider the  case where   $K^{t_0}$ is non-empty.
Since $K^{t_0}$ is $P(D^{t_0})$ convex,  we can find
a continuous \psh\  function $\phi^{t_0}$ on $D^{t_0}$ such that
\ga\label{phi0}
\max_{K^{t_0}}\phi^{t_0}<0< 1<\min_{E_2^{t_0}\setminus \om^{t_0}}\phi^{t_0}.
\end{gather}
Let $L^{t_0}=\max_{E_2^{t_0}}\phi^{t_0}$. Then $L^{t_0}>0$. Define
$$
\var^{t}(w)=\begin{cases}
 \max(L^{t_0}\var^{t}_0(w),\phi^{t_0}(w))     & \text{if $w\in E_2^{t}$}, \\
L^{t_0} \var_0^t(w)     & \text{if $w\in D^t\setminus E_2^{t}$}.
\end{cases}
$$
Suppose that $t$ is close to $t_0$.
 We want to show that   $\var^{t}$ is well-defined and  \psh\  on $D^t$, and  $\{\var^t\}\in C^{0,0}(\cL D)$.
The $E_2^{t}$ is contained in $D^{t_0}$ since $\{E_2^t\}$ is upper semi-continuous by \rl{upss}.  Hence $\var^t$ is well-defined.
At $w\in\pd E_2^{t}$,  we have
$$
L^{t_0}\var_0^t(w)=2L^{t_0}> \phi^{t_0}(w).$$
This shows that $\{\var^t\}$ is in $C^{0,0}$ for $t$ close to $t_0$.  That $\var^t$ is   \psh\    on $D^{t}$ follows   from the definition and the inequality we just proved.
We can also see that $\{\var^t\}\in C^{0,0}$. By the upper semi-continuity of $\{K^t\}$ and
$\{E^t_2\setminus \om^t\}$, we conclude that
\eq{kt01}
\var^t<0 \quad \text{on $K^t$},  \quad \var^t>1\quad\text{on $E_2^t\setminus \om^t$}
\eeq
for $t$ close to $t_0$.  (Of course, the first inequality is vacuous if $K^t$ is empty.)
  By \rp{smplur}, we may assume that $\{\var^t\}\in C^{\infty,\infty}(\{D^t\})$.
 Again, $\{\var^t\}\in C^{0,0}$  is a family of \psh\ uniform exhaustion functions on $D^t$.

We now find a finite open covering $\{I_\all; \all \in A\}$ of $[0,1]$,  and continuous \psh\ functions $\var_\all^t$ such that the above holds for $\var^t=\var_\all^t$ and $t\in I_\all$.
Let $\{\chi_\all\}$ be a partition of unity subordinate to $\{I_\all\}$ with  $ \chi_\all\geq0$.  Then $\var^t=\sum\chi_\all(t)\var_\all^t$ is \psh\ on $D^t$ and \re{kt01} holds
for all $t$.  We can repeat the above smoothing and approximation arguments for $\var^t$.  The proof is complete.
\end{proof}

 We need the following parametrized version of the Oka-Weil approximation theorem, which
 is crucial in solving the $\db$-equation for variable domains of holomorphy.  Let us first
 state the following elementary result which follows from the Morse lemma.
\le{morse} Let $r$ be a real function on a domain $D$ in $\rr^N$ that
 has no degenerate critical point. Assume that $K$ is a compact subset of $D$ such that $r$ does not attain any  local maximum value at each point in $K$.
For any $\del>0$ there exists $\e>0$ such that for each $z\in K$ there exists $\zeta$ such
that
$$
r(\zeta)-r(z)>\e, \quad |\zeta-z|<\del.
$$
\ele
In other words, the lemma concludes that the value of $r$ must increase  by a fixed amount from a point in $K$ within a fixed distance.
\pr{okwe} Let $\{D^t\}$ and $\cL D$ be as in \rpa{smplur}. Suppose that  there are
 plurisubharmonic  uniform exhaustion functions $\var^t$ on $D^t$  with $\{\var^t\}\in C^{0,0}(\cL D)$.  Fix $\ell\leq j$ and $\ell<\infty$.
Let $\{\om^t\}$ a continuous family of domains $\om^t$ such that each $\om^t$ contains $K_0^t$ for
 $
K_c^t:=\{z\in D^t\colon\var^t(z)\leq c\}.
$
Let  $\{f^t\}\in C^{0,j}_*(\{\om^t\})$ be a family of holomorphic functions $f^t$ on $\om^t$.
Let $\e>0$. There are  holomorphic functions $g^t$ on $D^t$
 such that  $\{g^t\}\in C_*^{0,j}(\cL D)$ and
$$
|g-f|_{\cL K_0;0,\ell}:=\max_{i\leq \ell,t\in[0,1], K_0^t\neq\emptyset}\,\max_{z\in K_0^t}|\pd_t^i\{(g^t-f^t)(z)\}|<\e.
$$
\epr
\begin{proof} We follow the method of approximation by using Leray formula.
We say that assertion $A_c(\{f^t\})$ holds   if  for each  $\e>0$ and
  each finite $\ell\leq j$
 there exists a family of functions $g^t$ satisfying the following:
 \bppp
 \item There is a family $\{g^t\}\in C_*^{0,j}(\{{K}_c^t\})$ of holomorphic functions $g^t$ defined near $K^t_c$, i.e. $g^t\in\cL O(K_c^t)$.
 \item
 $|g-f|_{\cL K_0;0,\ell} 
 <\e.$
 \eppp
  By assumptions, $A_0$ holds for all $\{f^t\}$. Let $c_*$ be the supremum of $c$ such that    $A_c$
  holds for all $\{f^t\}$.

  Let us  first show that $c_*=\infty$.
Assume for the sake of contradiction that $c_*$ is finite.
 By assumption, we know that $c_*>0$.
We may further assume that
\eq{csmin}
f^t\in \cL O(K^t_{95c_*/100}), \quad
\{f^t\}\in C^{0,j}_*(\{K^t_{95c_*/100}\}).
\eeq

We  find a finite open covering $\{I_i\}_{i=1}^m$ of $[0,1]$
and $t_i\in I_i$  and $\del_i>0$ which have the following properties:
For   real linear functions $L_i$ with $|L_i|<\f{1}{200}\min\{1,c_*\}$ on $K_{c_*+9}^{t_i}$, define
\gan
r_i:=\var^{t_i}*\chi_{\del_i}+L_i\in C^\infty(K_{c_*+9}^{t_i}), \\
\max\{|r_i(z)-\var^{t_i}(z)|\colon z\in K_{c_*+9}^{t_i}\}<\f{1}{100}\min\{1,\e_*\}.
\nonumber
\end{gather*}
  Here $\chi_{\del}(z)=\del^{-2n}\chi(\del^{-2n}z)$,
  $\chi\geq0$ is a smooth function
with compact support in $\cc^n$ , and $\int\chi =1$.
For suitable $\del_i>0$ and $L_i$,  the $r_i$
 are strictly plurisubharmonic and have no degenerate  critical point on $K^{t_i}_{c_*+9}$.
 Furthermore,  if
 $$
\Om^i_{c}:=\{z\in D^{t_i}\colon r_i(z)<c\}, \quad E^i_c:=\{z\in D^{t_i}\colon r_i=c\}, \quad L^i_c=\Om^i_c\cup E^i_c,
$$
and    $t\in I_i$, then
\eq{LtLi}
K_c^t\subset \Om^i_{c+c_*/100}, \qquad  L^i_c\subset K_{c+c_*/100}^t, \quad \forall c\leq c_*+8.
\eeq

 Using these properties, we choose $d_0>0$ such that
 $$
| \pd^2_\zeta r_i-\pd^2_zr_i|<\f{\la_0}{C_n}, \quad\text{for $ |\zeta-z|<d_0$}.
 $$
 We know that there exists $\la_0>0$ such that
 $
 \sum_{j,k}\DD{^2r_i}{\zeta_j\pd\ov\zeta_k}s_j\ov s_k\geq\la_0|s|^2.
 $
 Define  \gan
 d=\min\left\{d_0,\dist(L^i_{c+1}, E^i_{c_*+9})\colon 8c_*/10\leq c\leq c_*+6, 1\leq i\leq m\right\}.
 \end{gather*}
 Then $d>0$.
We now  apply  Theorems~\ref{hrfn} and \ref{rhfn} for a fixed domain as follows. For all $t$ we take $\om^t=\Om^i_{c_*+9}$,  $ r ^t=r_i-c$, and $\del_0=1$.
By \re{dd0e}, we find $\e_0\in(0,1)$ such that for any $c$ satisfying $8c_*/10\leq c\leq c_*+6$,   there are functions $\Phi_i(z,\zeta )$
 and mappings $w_i(z,\zeta )$ that  satisfy the properties:
 \bppp
\item  Let $z\in \Om^i_{c+\e_0/4}$ and  $\zeta\in \Om^i_{c+3} \setminus \Om^i_{c-\e_0}$. Then
  $\Phi_i(z,\zeta )$, $w_i(z,\zeta )$
   are holomorphic in $z$ and $C^1$ in $z,\zeta$, and
 $
 \Phi_i(z,\zeta )=w_i(z,\zeta )\cdot(\zeta-z).
 $
 \item If  $\pd \Om^i_c\in C^1$ and $h\in A(\ov{ \Om^i_c})$,  then
  \eq{lerayf}
 h(z)=c_n\int_{\pd \Om^i_c}
 h(\zeta)\f{w_i(z,\zeta )\cdot d\zeta\wedge(\db_\zeta
 w_i(z,\zeta )\cdot
 d\zeta))^{n-1}}{\Phi_i^n(z,\zeta )}, \quad\forall z\in \Om^i_c.
 \eeq
 \eppp
 It is crucial from \rt{rhfn} that although $\Phi_i$, $w_i$ depend on $c$, the $\e_0$  does not depend
 on $c$. Also, as mentioned in \rrem{2leray},  we  need the Leray formula \re{lerayf} to carry out the following approximation.

By \re{elpp} we obtain
 $
| \Phi_i(z,\zeta )|\geq C
$
for $r_i(\zeta)\geq c_*/2$ and $r_i(z)\leq c_*/100$.
Choose $d_1\in(0,d)$ such that if $|\tilde\zeta-\zeta|\leq d_1$, $r_i(z)\leq c_*/100$,
and  $r_i(\tilde\zeta)\geq r_i(\zeta)\geq c_*/2$, then
$$
|\Phi_i(z,\tilde\zeta)-\Phi_i(z,\zeta )|\leq \f{1}{4}|\Phi_i(z,\tilde\zeta)|.
$$
This shows that $\f{1}{\Phi_i(z,\zeta )}$, which is holomorphic in $z$ for $r_i(z)<r_i(\zeta)$,
can be approximated by  polynomials in $\f{\Phi_i(z,\zeta )-\Phi_i(z,\tilde\zeta)}{\Phi_i(z,\tilde\zeta)}$
on $r_i(z)\leq c_*/100$ in the super norm.  The quotient and $w_i(z,\zeta )$
are holomorphic in $z$ on the domain defined by
$$
r_i(z)<r_i(\tilde \zeta), \quad r_i(z)<r_i(\zeta)+\e_0/4.
$$
Since $r$ is strictly plurisubharmonic in $D_{c_*+9}^{t_i}$,  it does not attain any local maximum value
in the set.
By \rl{morse},  there exists $\e_1>0$, depending on $d_1$, such that  if $r_i(\zeta)\leq c_*+8$,
there exists $\tilde\zeta$ such that
\eq{rijump}
r_i(\tilde \zeta)\geq r_i(\zeta)+\e_1, \quad  |\tilde\zeta-\zeta|\leq d_1.
\eeq
Let $\e_2=\min(\e_0/4,\e_1, \f{1}{2})$.
Therefore,  for each $\zeta$, $\f{1}{\Phi_i(z,\zeta )}$
can be approximated
on the domain defined by $r_i(z)\leq c_*/100$ by     holomorphic functions on $r_i(z)<r_i(\zeta)+\e_2$.
This is the approximation that we will use in the following argument.

Fix $i$ and assume that $t\in I_i$.
We start with a regular value $\hat c_1$ of  $r_i$ with $\hat c_1\in (93c_*/100, 94c_*/100)$.
By \re{csmin}, we can replace $h,c$   in \re{lerayf} by $f^t$ ($t\in I_i$),
${\hat c_1}$ respectively.
This gives us an integral representation for $f^t$. We then differentiate $f^t$ and the integrand
to get integral representations for $\pd_t^lf^t$ for $l\leq \ell$.  For the $\ell+1$ integrals, we approximate them by Riemann sums to
 obtain for a given $\e>0$,
 \ga \label{ftg1t}
\sup_{t\in I_i}\sup_{z\in K_0^t} \left|\pd_t^l\left\{
f^t(z)-g_i^t(z)\right\}
 \right|<\e,\\
 g_i^t(z)=\sum_{m=1}^N \Bigl\{c_nf^t(\zeta)\f{P(w_i(z,\zeta ), \pd_{\ov\zeta} w_i(z,\zeta ))}
 {(\Phi_i)^n( z,\zeta )}\Bigr\} \bigg|_{\zeta=\zeta_{i,m}}.
 \nonumber
 \end{gather}
 Here $P$, $\zeta_{i,m}$ are independent of $t\in I_i$,
 $l=0,\dots, \ell$, $P$ is a polynomial, and
  $\zeta_{i,m}\in E^i_{\hat c_1}$.  By the approximation  obtained earlier, we know
  that each term in the above sum of $g_i^t$ can be approximated on $K_0^t$ by
    functions  that are
  holomorphic on
  $
  L^i_{\hat c_1+\e_2}.
  $
  This gives us a holomorphic function $\tilde g_i^t$  on the above set such that \re{ftg1t} holds
  when $g_i^t$ is replaced by $\tilde g_i^t$.
  Next, we choose a regular value $\hat c_2$  of $r_i$ with $\hat c_1+\e_2/2<\hat c_2<\hat c_1+\e_2$. We repeat the argument.
 We repeat this $n_i$ times, with $n_i$ independent of $t$ via \re{rijump},  to obtain \re{ftg1t} for a function $g_i^t$ that is holomorphic on $L^i_{\hat c_1+n_i\e_2/2}$. Here $n_i$ is so chosen that $c_*+5<\hat c_1+n_i\e_2/2<c_*+6$.
 From \re{LtLi} it follows that $g_i^t$ is holomorphic on
 $
 L_{c_*+4}^t$ for $t\in I_i.
 $
 Note that $L_{c_*+6}\supset K_{c_*+5}^t$.
 Using a partition of unity $\{\chi_i(t)\}$, we get the approximation $g^t=\chi_i g_i^t$  so that
 $g^t$ is holomorphic on $K^t_{c_*+5}$ and
 $
  \left\|
f  -g
 \right\|_{\cL K_0; 0,\ell}<\e.
  $
  This shows that $c_*=\infty$.

 To finish the poof,  let $\e>0$ and $\{h_i^t\}\in C^{0,j}_*(\{K_i^t\})$ such that $h_i^t\in \cL O(K_i^t)$ and
$$
\|f-h_i\|_{\cL K_0; 0,j}<\f{\e}{2}, \quad
\|h_{i+1}-h_i\|_{\cL K_i; 0,j}<\f{\e}{2^{i+1}}, \quad i=0,1,\ldots.
$$
Then $\lim_{i\to\infty} h_i^t$ has the desired properties.
 \end{proof}

\begin{rem}
By the Cauchy inequality, if $\tilde h^t$ approximates $h^t$ on $\om^t$, where $\om^t$ are open neighborhoods of  compact subsets $K^t$  in $C_*^{0,j}$ norm, then it also approximates in $C^{\infty,j}$.
 \end{rem}

We now prove the second part of \rt{dbartb}. Here we study the interior regularities. By avoiding the boundary regularities, we are able to use  spaces $C_*^{\ell+1+\all,j}$.
\th{intr} Let  $\{D^t\}$ be a continuous family of  non-empty   domains in $\cc^n$. Let $\{\var^t\}\in C^{0,0}(\cL D)$ be a family
 of plurisubharmonic   uniform exhaustion functions $\var^t$ on $D^t$.
 Let
 $0<\alpha<1$ and $1\leq q\leq n$,  and let $k,\ell, j\in\ov\nn$ with $k\geq j$. Let
  $\{f^t \}\in C_*^{\ell+\all,j}(\cL D)$ $($resp. $C^{k+\all,j}(\cL D))$ be a family of $\dbar$-closed
  $(0,q)$-forms on $D^t$.
There exist a family of  solutions $u^t$ to $\db u^t=f^t$
on $D^t$  so that $\{u^t\}$ is in $C^{\ell+1+\all,j}_*(\cL D)$ $($resp. $C^{k+1+\all,j}(\cL D))$.
\eth
\begin{proof}
Let $K_m^t=\{z\in D^t\colon\var^t(z)\leq m\}$. Denote by $\cL K_m$
the total space of $\{K_m^t\}$. Since $\{\var^t\}$ is in $C^{0,0}$ and each $D^t$ is non empty, there is a $c_0$ such that $K_{c_0}^t$ is non-empty for all $t$. Without loss of generality, we assume that all $K_{-1}^t$ are non-empty.
We will
 first find $u_m^t$  such
that $\db u_m^t=f^t$ near ${K_m^t}$; more precisely $\db u_m^t=f^t$ on $\tilde\om^t$ while $\{\tilde\om^t\}$ is a continuous family of domains of which the total space contains $\cL K_m$.
Furthermore,  $\{u_m^t\}$ is of class $C_*^{\ell+1+\all,j}(\cL D)$. Fix $t_0$ and assume that $t$ is sufficiently close to $t_0$. We know that $K_{m+1}^t$
contains $\ov {K_m^{t_0}}$. Take $D^*$ such that $D^*$ has a $C^\infty$ \spc\  boundary. Moreover,
$K_{m+1}^t$ contains $\ov{ D^*}$  and $D^*$ contains $\ov {K^t_m}$.  Let $T_{D^*}$ be a solution operator via the homotopy formula.
Let $u_m^t=T_{D^*}f^t$. Then $\{u_m^t\}$ is in $C_*^{\ell+1+\all,j}(\{K_m^t\})$ for $t$ close to $t_0$.  Using a partition of unity $\{\chi_i\}$ on $[0,1]$,
we can find $u_m^t$  such that for each $t$,
$\db u_m^t=f^t$ near $\ov{K_m^t}$. Using cut-off, we may further achieve $\{u^t_m \}\in C_*^{\ell+1+\all,j}(\cL D)$.

We now assume that $q>1$. Then $\db( u_1^t-u_2^t)=0$ near $\ov {K_1^t}$.  By the above arguments, we can find $v_1^t$ such that $\db v_1^t=u_1^t-u_2^t$
near $\ov {K_1^t}$ and $\{\db v_1^t\}\in C_*^{\ell+1+\all,j}(\cL D)$.   We take $\hat u_1^t=u_1^t$ and $\hat u_2^t=u_2^t+\db v_1^t$. Then $\hat u_2^t=\hat u_1^t$ near $\ov{ K_1^t}$, $\{\hat u_2^t\}\in C_*^{\ell+1+\all,j}(\cL D)$,
 and $\db\hat u_2^t=\db u_2^t=f^t$ near  $\ov {K_2^t}$.  Inductively, we have
$\db(\hat u_j^t-u_{j+1}^t)=0$ near $\ov{ D_j^t}$.  We find $v_{j}^t$ such that $\db v_{j}^t=\hat u_j^t-u_{j+1}^t$ near $\ov{K_j^t}$, and $\{\db v_{j}^t\}\in C^{\ell+1+\all,j}_*(\cL D)$.  Set $\hat u_{j+1}^t=u_{j+1}^t+\db v_{j}^t$. Then $\hat u_{j+1}^t=\hat u_j^t$ near $\ov{ K_j^t}$.

Assume now that $q=1$.  If $j$ is finite, we take $j_\ell=j$; otherwise, we take a sequence of integers $j_\ell$ tending to $\infty$. Analogously, we take a sequence of integers $k_\ell$ tending to $k$.
 Let $\hat u_1^t=u_1^t$.  Then $u_2^t-u_1^t$ is holomorphic near $\ov{K_1^t}$.
By \rp{okwe}, we can find holomorphic functions $h^t_1$ on $D^t$ such that $\{h_1^t \}\in C_*^{\infty,j}$ and
$$
|\hat u_1-u_2-h_1 |_{C_*^{\ell_1+1+\all,j_1}(\cL K_1)}<1/2.
$$
Let $\hat u_2^t=u_2^t+h^t$. We still have $\db \hat u_2=\db u_2^t=f^t$. Inductively, we find   holomorphic functions $h_\ell^t$ on $D^t$
such that  $\{h_\ell^t\}\in C_*^{\infty,j}$ and
$$
|\hat u_m-u_{m+1}-h_m |_{C_*^{\ell_m+1+\all, j_\ell}(\cL K_m)}<\f{1}{2^m}.
$$
Here $
\cL K_m=\{(z,t)\colon \var^t (z)\leq m\}.
$
We then define $\hat u_{m+1}=u_{m+1}+h_m$.  Using the Cauchy estimates, we verify that $\{\hat u_{m}^t\}$ converges to $\{\hat u^t\}$ in $C_*^{\ell_m+1+\all,j}(\cL D)$.

 Analogously, we can verify  $C^{k+1+\all, j}(\cL D)$ regularity of the  solutions.
\end{proof}

%

\begin{defn}  Let  $\{D^t\}$ be a continuous family of non-empty  domains in $\cc^n$.
 Let $A_{j}(\cL D)$ denote the set of families $\{f^t\}$ of holomorphic functions $f^t$ on $D^t$ with  $\{f^t\}\in C_*^{0,j}(\cL D)$.  Let $\{E^t\}$, with total space $\cL E$,  be a family of subsets $E^t$ of $D^t$, the $A^j(\cL D)$-hull $\{\widehat E^t\}$ of $\{E^t\}$ is defined by its total space
 $$
 \widehat{\cL E}=\left\{(z,t)\in \cL D\colon|f^s(z)|\leq\sup_{(w,s)\in \cL E}|f^{s}(w)|, \forall \{f^t\}\in
 A^j(\cL D)\right\}.
 $$
 We say that $\{E^t\}$ is $A^j(\cL D)$ convex if $\widehat E^t =E^t$ for all $t$.
 \end{defn}

 As an application of \rp{okwe}, we solve the following version  of the Levi problem for domains
 with parameter.
 \th{levip} Let  $\{D^t\}$ be a continuous family of non-empty  domains in $\cc^n$.
Let $\{\var^t\}\in C^{0,0}(\cL D)$ be a family
 of
 plurisubharmonic  uniform exhaustion functions $\var^t$ on $D^t$.
  Then $\{K^t_c\}$  is $A^j(\cL D)$   convex for all $c\in\rr$, where $K_c^t\subset D^t$ is defined by $\var^t\leq c$.
    \eth
  \begin{proof} By \rp{vart},  we may assume that $\{\var^t\}\in C^{\infty,\infty}(\cL D)$ and   $\var^t$ are strictly \psh\  on $D^t$.
    It suffices to show that
  $(\widehat {K_c})^t$ is contained in $K^t_{c_1}$, if $c_1>c$.

   Fix $p\in D^s$ with $\var^{s}(p)=c_1>c$.  Choose a real linear function $L$ such
  that $\phi^{s}=\var^s+L$ has only discrete critical points in $D^s$. Let $\tilde D_c^s$ be defined by $\phi^s<c$.
  We also choose $L$ so small on $K_{c_2}^s$   that for  some $c'>c$,
the $\tilde D_{c'}^s$ contains $K_c^s$.

   Let $F(w)$ be the Levi polynomial of $\var^{s}$ at $w_0=p^{s}$. Then
  $$
  \var^{t_0}(w)>\var^{t_0}(w_0)+2\RE F(w)+c|w-w_0|^2.
  $$
   Choose $\del>0$ sufficiently small. For $\var^{t_0}(w)\leq c'+\del$ and $|w-w_0|\geq \e$, we have
 \eq{2Ref}
 2\RE F(w)<\del-c\e^2.
 \eeq

  Let $\chi(w)$ be a smooth function supported in $B_{2\e}$ such that $\chi=1$ on $B_\e$. We consider
  $$
  u(w)=\chi(w-w_0) e^{L F(w)}-v(w).
  $$
  We want $u$ to be holomorphic on $K^{s}_{c'+\del}$ by solving
  $
  \db v_L=g_L(w):=\db(\chi(w-w_0)e^{LF(w)}).
  $
We have $g_L(w)=e^{LF(w)}\db\chi(w-w_0)$, which is zero on $B_\e$.   Thus \re{2Ref} implies that
  $$
|g_L|_{0}:= \sup_{K_{c'+\del}^{s}} |g_L|\leq Ce^{L(\del-2\e^2)}.
  $$
  Take $0<\del<c\e^2/2$ such that $D_*:=K^{s}_{c'+\del}$ has smooth boundary. Thus $|g_L|_{0}\to0$ as $L\to+\infty$. We now solve the $\db$-equation on $D_*$ by using
  the homotopy formula on $D_*$ to get the estimate
  $$
  |v_L|_0\leq C'|g_L|_{0},
   $$
   which is uniform in $L$. Fix $L$ such that $C'C e^{L(\del-2\e^2)}<1/4$.  On $K^{s}_c$, we get $|u|<1/4$. Also $|u(w_0)|\geq 1-|v_L(w_0)|>3/4$.  Note that $u$ is holomorphic on $K_{c'+\del}^{s}$.
  Fix $c<c''<c'+\del/2$.  Using a cut-off function $\chi(t)$ such that $0\leq\chi\leq1$ and $\chi(s)=1$.  We obtain $\{u^t=\chi(t)u\}$ such that each $u^t$ is holomorphic on $K_{c''}^t$. Moreover
   $$
   |u^t|<1/4\quad \text{on $K_{c}^t$}, \quad |u^{s}(w_0)|>3/4.
   $$
   By \rp{okwe}, we can replace $u^t$ by $\tilde u^t$ which is holomorphic on $D^t$ such that the above still holds for $\tilde u^t$, while $\{\tilde u^t \}\in A^{j}(\cL D)$.
   Therefore, $(p,s)$ is not in the $A^j(\cL D)$ hull of ${K_c}$.
  \end{proof}

We now consider Cousin problems with parameter.
We formulate the problems and its solution as follows. Here it is  more convenient to formulate the problems and find the solutions, by identifying a continuous family $\{D^t\}$ with its open total space $\cL D$
in $\cc^n\times[0,1]$.
 We also identity a family $\{f^t\}$ of functions $f^t$ on $D^t$ with a function $(x,t)\to f^t(x)$ on $\cL D$.
\th{cousin}
Let $0<\all<1$.  Let $j,k\in\ov\nn$ with $k\geq j$ and $k>0$. 
 Let $\{D^t\}$
be a continuous family of  non-empty  domains in $\cc^n$.  Suppose that   $D^t$ admit
 plurisubharmonic  uniform exhaustion functions $\var^t$ with $\{\var^t\}\in C^{0,0}(\cL D)$.  Suppose that $\{\cL D_a\colon a\in A\}$ is an open covering of $\cL D$, where $\cL D_a$ is the total space of $\{D_a^t\}$.
Let  $\{f_{ab}^t\}$ be a family of functions such that each $f^t_{ab}$ is holomorphic  on   $ D_a^t\cap D_b^t$. Assume that $\{f_{ab}^t\}\in C_*^{0,j}( \cL D_a\cap \cL D_b)$.
\bppp
\item $($First Cousin problem$.)$
Assume that for all $a,b,c\in A$,  $f_{ab}^t=-f_{ba}^t$,  and
$$
f_{ab}^t+f_{bc}^t=f_{ac}^t, \quad\text{on $ D_a^t\cap D_b^t\cap D_c^t$}.
$$
There exist
 families $\{f_a^t\}\in C^{\infty,j}( \cL D_a)$  of holomorphic functions    $f_a^t$
 on $ D_a^t$ such that
  $f^t_a-f^t_b=f^t_{ab}$.
  \item $($Second Cousin problem$.)$
   Suppose that  each $f_{ab}^t$
  does not vanish   on
  $ D_a^t\cap D_b^t$. Suppose that for all $a,b,c$, $f_{ab}^t=(f_{ba}^t)^{-1}$, and
  $$
  f_{ab}^tf_{bc}^t=f_{ac}^t, \quad\text{on $ D_a^t\cap D_b^t\cap D_c^t$}.
  $$
   There exists a family $\{f_a^t\}$ of nowhere vanishing holomorphic functions
  $f_a^t$
   on $ D^t_a$ such that  $\{f_a^t\}\in
  C^{\infty,j}(\cL D_a)$  and
  $
  f_a^t(f_b^t)^{-1}=f_{ab}^t,$
  provided  there exists a family $\{g_a^t\}\in C^{0,0}(\cL D_a)$ of functions $g_a^t$ vanishing
nowhere   on $D_a^t$ such that   $g_a^t(g_b^t)^{-1}=f_{ab}^t$ for all $a,b$.
  \eppp
  \eth
\begin{proof} Although all $D^t$ are non-empty,  we allow some $D_a^t$ to be empty for a non-empty total set $\cL D_a$.  Since each $\cL D_a$ is open in $\cL D$, then if  all $f^t_{a,b}$ are holomorphic on
$D_a^t\cap D_b^t$, the Cauchy formula implies that  $\{f_{ab}^t\}\in C^{\infty ,j}(  \cL D_a\cap \cL D_b)$
for $\{f_{ab}^t\}\in C_*^{0,j}( \cL D_a\cap \cL D_b)$.

 First, we consider the first Cousin problem.  
By assumption,  $\{\cL D_a\}$ is an open covering of $\cL D$.   We choose a partition of unity $\{\var_\nu\}$
that is  subordinate to the covering $\{ \cL D_a\}$.
 More precisely,    $\var_\nu\in
 C_0^\infty(\cL D_{e_\nu})$, all $\var_\nu$ but finitely many of them, vanish identically on any compact subset of $\cL D$,
and $\sum\var_\nu=1$ on $ \cL D$.  Set $\var_\nu^t(z)=\var_\nu(z,t)$ and
$$
g_a^t(z)=\sum \var_{\nu}^t(z)f_{e_\nu a}^t(z), \quad\forall  z\in D_a^t.
$$
Here $\var_{\nu}^t(z)f_{e_\nu a}^t(z)=0$ if $z$ is not in $D_{e_{\nu}}^t\cap D_a^t$.
%
 We can verify that $\{g_a^t\}\in C^{\infty,j}(\cL D_a)$ and $g_a^t-g_b^t=f_{ab}^t$. Thus $\db g_a^t=\db g_b^t$
   on $D_a^t\cap D_b^t$.  This shows
   that $\tilde g^t:=\db g_a^t$ is well-defined on $D^t$. Also $\{\tilde g^t\}\in C^{\infty,j}(\cL D)$.  By \rt{intr} we can find $\{u^t\}\in C^{\infty,j}(\cL D)$ such that
   $\db g_a^t=\db u^t$. Now $f^t_a=g_a^t-u^t$ becomes holomorphic on $D_a^t$ and $\{f_a^t\}$ is of class $C^{\infty,j}(\cL D_a)$, while $f_a^t-f_b^t=f_{ab}^t$.

 For the second Cousin problem, we assume that there exists $\{g_a^t\}\in C^{0,0}(\cL D_a)$ such that
 $
 f_{ab}^t=g_a^t(g_b^t)^{-1}.
 $
We first consider the case that  each $\mathcal D_a$ is simply connected. We can write $g^t_a=e^{\log g^t_a}$ with $\{\log g^t_a\}\in
C^{0,0}(\cL D_a)$. Let $h_{ab}^t=\log g^t_a-\log g^t_b$. We want to show that $\{h_{ab}^t\}$ is in $C^{\infty,j}(\cL D_a\cap \cL D_b)$.
Indeed,
$
e^{h_{ab}^t}=f_{ab}^t.
$
Locally, we get $h^t_{ab}(z)=\log f_{ab}^t(z)+2\pi i m(z,t)$ with $m(z,t)\in\zz$. By  continuity, we conclude that $m$ is locally constant.
This shows that $\{h^t_{ab}\}$ is in $C^{\infty,j}(\cL D_a\cap \cL D_b)$. By the solution of the first Cousin problem, we find $\{h_a^t\}\in
C^{\infty,j}(\cL D_a)$ such that $h_a^t$ is holomorphic on $D_a^t$,   $e^{h_a^t-h_b^t}=f_{ab}^t$, and
\eq{logc}
h_a^t-h_b^t=\log g_a^t-\log g_b^t.
\eeq
When  $\cL D_a$ are not simply connected, we apply a refinement$\{\tilde {\cL D}_\beta\colon \beta\in B\}$ to the open covering
of $\{\cL D_a\colon a\in A\}$ such that each $\tilde {\cL D}_\beta$ is a simply connected open subset of some $  {\cL D}_a$, while $\cup\tilde {\cL D}_\beta=\cL D$. By \re{logc}, we find $\{\tilde f_\beta^t\}\in C^{\infty,j}$  such that, for $\tilde{\cL D}_\all\subset\cL D_a$,
$$
h_\alpha^t-h_\beta^t=\log g_\all^t-\log g_\beta^t, \quad \log g_\all=\log g_a|_{\tilde{\cL D}_\all}.
$$
 Then $h_a^t=h_\alpha^t$ is well-defined on $D_a^t$ and $\{h_a^t\}$
provides the desired solutions.
\end{proof}

Analogously, we  verify the following result.
\th{cousinb}Let $0<\all<1$. Let   $j,k\in\ov\nn$ with $k-1\geq j$.    Let $\{D^t\}$
be a continuous family of    bounded strongly pseudoconvex domains in $\cc^n$ of   $C^{k+1,j}$ boundary.   Suppose that $\{ {\cL  D}_a  \colon a\in A\}$ is an open covering of   $\ov {\cL D}$, where $ {\cL D}_a$ is the total space of $\{  D_a^t\}$. Let  $\{f_{ab}^t\}\in  C^{k+1/2,j}(\cL D_a\cap   \cL D_b)$ be a family of functions  $f^t_{ab}$ that are holomorphic  in the interior of $  D_a^t\cap   D_b^t$ and satisfy  \rta{cousin} $(i)$ $($resp. $(ii))$.   There exists a family
  $\{f_a^t\}\in   C^{k+1/2,j}(\cL D_a)$ satisfying  \rta{cousin} $(i)$ $($resp. $(ii))$.
  \eth
  \begin{proof}
  In the proof of previous theorem, we have $\tilde g^t=\db g_a^t$ and $\{\tilde g^t\}\in C^{\infty,j}(\cL D)$. When considering boundary, we can only claim  $\{\var_\nu^t\}\in C^{k,j}(\{D_{e_\nu}^t\})$ obtained by parameterizing $\{\ov{D^t}\}$ via embeddings $\{\Gaa^t\}\in C^{k+1,j}(\ov {\cL D})$. Thus, we have $\{\tilde g^t\} \in C^{k,j}(\ov{\cL D})$.
With $k-1\geq j$,   using \rt{spc}, we solve $\db u^t=\tilde g^t$ with $\{u^t\}\in C^{k+1/2,j}(\ov{\cL D})$.
  \end{proof}

When $n=1$, using \rt{spc1} we get a more precise result.
\th{cousin1d} Let $0<\all<1$. Let $k,j\in\ov\nn$ with $k\geq j$. Let $\{D^t\}$ be a smooth
family of bounded domains in $\cc$ with $C^{k+\all,j}\cap C^{1+\all,j}$ boundary. If  $\{f_{ab}^t\}$   in \rta{cousinb} are
in $\cL C^{k+\all,j}(\cL D_a\cap \cL D_b)$ for all $a,b$, then the solutions $\{f^t_a\}$ are
in $C^{k+\all,j}(\cL D_a)$.
\eth




\begin{bibdiv}
\begin{biblist}

\bib{BG14}{article}{
   author={Bertrand, F.},
   author={Gong, X.},
   title={Dirichlet and Neumann problems for planar domains with parameter},
   journal={Trans. Amer. Math. Soc.},
   volume={366},
   date={2014},
   number={1},
   pages={159--217},
}

\bib{CS01}{book}{
   author={Chen, S.-C.},
   author={Shaw, M.-C.},
   title={Partial differential equations in several complex variables},
   series={AMS/IP Studies in Advanced Mathematics},
   volume={19},
   publisher={American Mathematical Society, Providence, RI; International
   Press, Boston, MA},
   date={2001},
   pages={xii+380},
}

\bib{DT91}{article}{
   author={Diederich, K.},
   author={Ohsawa, T.},
   title={On the parameter dependence of solutions to the
   $\overline\partial$-equation},
   journal={Math. Ann.},
   volume={289},
   date={1991},
   number={4},
   pages={581--587},
}

\bib{Do56}{article}{
   author={Dolbeault, P.},
   title={Formes diff\'erentielles et cohomologie sur une vari\'et\'e
   analytique complexe. I},
   journal={Ann. of Math. (2)},
   volume={64},
   date={1956},
   pages={83--130},
}

\bib{GT01}{book}{
   author={Gilbarg, D.},
   author={Trudinger, N. S.},
   title={Elliptic partial differential equations of second order},
   series={Classics in Mathematics},
   note={Reprint of the 1998 edition},
   publisher={Springer-Verlag, Berlin},
   date={2001},
}

\bib{Gr58}{article}{
   author={Grauert, H.},
   title={On Levi's problem and the imbedding of real-analytic manifolds},
   journal={Ann. of Math. (2)},
   volume={68},
   date={1958},
   pages={460--472},
}

\bib{GL70}{article}{
   author={Grauert, H.},
   author={Lieb, I.},
   title={Das Ramirezsche Integral und die L\"osung der Gleichung $\bar
   \partial f=\alpha $ im Bereich der beschr\"ankten Formen},
   language={German},
   journal={Rice Univ. Studies},
   volume={56},
   date={1970},
   number={2},
   pages={29--50 (1971)},
}

\bib{GK82}{article}{
   author={Greene, R. E.},
   author={Krantz, S. G.},
   title={Deformation of complex structures, estimates for the $\bar
   \partial $\ equation, and stability of the Bergman kernel},
   journal={Adv. in Math.},
   volume={43},
   date={1982},
   number={1},
   pages={1--86},
}

\bib{Ha77}{article}{
   author={Hamilton, R. S.},
   title={Deformation of complex structures on manifolds with boundary. I.
   The stable case},
   journal={J. Differential Geometry},
   volume={12},
   date={1977},
   number={1},
   pages={1--45},
}

\bib{Ha79}{article}{
   author={Hamilton, R. S.},
   title={Deformation of complex structures on manifolds with boundary. II.
   Families of noncoercive boundary value problems},
   journal={J. Differential Geom.},
   volume={14},
   date={1979},
   number={3},
   pages={409--473 (1980)},
}

\bib{He70}{article}{
   author={Henkin, G. M.},
   title={Integral representation of functions in strongly pseudoconvex
   regions, and applications to the $\overline \partial $-problem},
   language={Russian},
   journal={Mat. Sb. (N.S.)},
   volume={82 (124)},
   date={1970},
   pages={300--308},
}

\bib{HL84}{book}{
   author={Henkin, G. M.},
   author={Leiterer, J.},
   title={Theory of functions on complex manifolds},
   series={Monographs in Mathematics},
   volume={79},
   publisher={Birkh\"auser Verlag, Basel},
   date={1984},
   pages={226},
}

\bib{Ho65}{article}{
   author={H{\"o}rmander, L.},
   title={$L^{2}$ estimates and existence theorems for the $\bar \partial
   $\ operator},
   journal={Acta Math.},
   volume={113},
   date={1965},
   pages={89--152},
}

\bib{Ho90}{book}{
   author={H{\"o}rmander, L.},
   title={An introduction to complex analysis in several variables},
   series={North-Holland Mathematical Library},
   volume={7},
   edition={3},
   publisher={North-Holland Publishing Co., Amsterdam},
   date={1990},
}

\bib{Ke71}{article}{
   author={Kerzman, N.},
   title={H\"older and $L^{p}$ estimates for solutions of $\bar \partial
   u=f$ in strongly pseudoconvex domains},
   journal={Comm. Pure Appl. Math.},
   volume={24},
   date={1971},
   pages={301--379},
}

\bib{KS60}{article}{
   author={Kodaira, K.},
   author={Spencer, D. C.},
   title={On deformations of complex analytic structures. III. Stability
   theorems for complex structures},
   journal={Ann. of Math. (2)},
   volume={71},
   date={1960},
   pages={43--76},
}

\bib{Ko63}{article}{
   author={Kohn, J. J.},
   title={Harmonic integrals on strongly pseudo-convex manifolds. I},
   journal={Ann. of Math. (2)},
   volume={78},
   date={1963},
   pages={112--148},
}

\bib{Ko73}{article}{
   author={Kohn, J. J.},
   title={Global regularity for $\bar \partial $ on weakly pseudo-convex
   manifolds},
   journal={Trans. Amer. Math. Soc.},
   volume={181},
   date={1973},
   pages={273--292},
}

\bib{Li70}{article}{
   author={Lieb, I.},
   title={Die Cauchy-Riemannschen Differentialgleichungen auf streng
   pseudokonvexen Gebieten. Beschr\"ankte L\"osungen},
   language={German},
   journal={Math. Ann.},
   volume={190},
   date={1970/1971},
   pages={6--44},
}

\bib{LR80}{article}{
   author={Lieb, I.},
   author={Range, R. M.},
   title={L\"osungsoperatoren f\"ur den Cauchy-Riemann-Komplex mit ${\mathcal
   C}^{k}$-Absch\"atzungen},
   language={German},
   journal={Math. Ann.},
   volume={253},
   date={1980},
   number={2},
   pages={145--164},
}

\bib{Mi91}{article}{
   author={Michel, J.},
   title={Integral representations on weakly pseudoconvex domains},
   journal={Math. Z.},
   volume={208},
   date={1991},
   number={3},
   pages={437--462},
}

\bib{Ov71}{article}{
   author={{\O}vrelid, N.},
   title={Integral representation formulas and $L^{p}$-estimates for the
   $\bar \partial $-equation},
   journal={Math. Scand.},
   volume={29},
   date={1971},
   pages={137--160},
}

\bib{Ra69}{article}{
   author={Ram{\'{\i}}rez de Arellano, E.},
   title={Ein Divisionsproblem und Randintegraldarstellungen in der
   komplexen Analysis},
   language={German},
   journal={Math. Ann.},
   volume={184},
   date={1969/1970},
   pages={172--187},
   }

\bib{Ra86}{book}{
   author={Range, R. M.},
   title={Holomorphic functions and integral representations in several
   complex variables},
   series={Graduate Texts in Mathematics},
   volume={108},
   publisher={Springer-Verlag, New York},
   date={1986},
   pages={xx+386},
}

\bib{Se64}{article}{
   author={Seeley, R. T.},
   title={Extension of $C^{\infty }$ functions defined in a half space},
   journal={Proc. Amer. Math. Soc.},
   volume={15},
   date={1964},
   pages={625--626},
}

\bib{Si74}{article}{
   author={Siu, Y.-T.},
   title={The $\bar \partial $ problem with uniform bounds on derivatives},
   journal={Math. Ann.},
   volume={207},
   date={1974},
   pages={163--176},
}


\bib{We89}{article}{
   author={Webster, S. M.},
   title={A new proof of the Newlander-Nirenberg theorem},
   journal={Math. Z.},
   volume={201},
   date={1989},
   number={3},
   pages={303--316},
}

\end{biblist}
\end{bibdiv}

\end{document}